\documentclass[oneside,english]{amsart}
\usepackage[T1]{fontenc}
\usepackage[latin9]{inputenc}
\usepackage{verbatim}
\usepackage{amstext}
\usepackage{amsthm}
\usepackage{amssymb}
\usepackage{esint}

\makeatletter
\numberwithin{equation}{section}
\numberwithin{figure}{section}
\theoremstyle{plain}
\newtheorem{thm}{\protect\theoremname}
  \theoremstyle{plain}
  \newtheorem{lem}{\protect\lemmaname}
  \theoremstyle{remark}
  \newtheorem{claim}{\protect\claimname}

\makeatother

\usepackage{babel}
  \providecommand{\claimname}{Claim}
  \providecommand{\lemmaname}{Lemma}
\providecommand{\theoremname}{Theorem}

\begin{document}

\title{Interior Hessian estimates for sigma-2 equations in dimension three}

\author{GUOHUAN QIU}
\begin{abstract}
We prove a priori interior $C^{2}$ estimate for $\sigma_{2}=f$ in
$\mathbb{R}^{3}$, which generalizes Warren-Yuan's result \cite{warren2009hessian}. 
\end{abstract}

\subjclass[2000]{35J60,35B45.}

\date{\today}

\address{Department of Mathematics and Statistics, McGill University, 805
Sherbrooke O, Montreal, Quebec, Canada, H3A 0B9}

\email{guohuan.qiu@mail.mcgill.ca}

\thanks{Research of the author was supported by CRC postdoctoral fellowship
at McGill University.}

\maketitle

\section{Introduction}

The interior regularity for solutions of the following $\sigma_{2}$-
Hessian equations is a longstanding problem in fully nonlinear partial
differential equations, 
\begin{equation}
\sigma_{2}(D^{2}u)=f(x,u,Du)>0,\begin{array}{ccc}
 & x\in B_{1}\subset\mathbb{R}^{n}\end{array}\label{eq:sigma2}
\end{equation}

where $\sigma_{k}$ the $k-$th elementary symmetric function for
$1\le k\leq n$. 

Heinz \cite{heinz1959elliptic} first derived this interior estimate
in two dimension. For higher dimensional Monge-Ampere equations, Pogorelov
constructed his famous counter-examples even for $f$ constant and
convex solutions in \cite{pogorelov1978minkowski}. Caffarealli-Nirenberg-Spruck
studied more general fully nonlinear equations such as $\sigma_{k}$
equations in their seminal work \cite{caffarelli1985dirichlet}. And
Urbas also constructed counter-examples in \cite{urbas1990existence}
with $k\geq3$. Because of the counter-examples in $k\geq3$, the
best we can expect is the Pogorelov type interior $C^{2}$ estimates
which were derived in \cite{pogorelov1978minkowski,chou2001variational}
and see \cite{guan2015global,li2016interior} for more general form.
So people in this field want to know whether the interior $C^{2}$
estimate for $\sigma_{2}$ equations holds or not for $n\geq3$. Moreover,
this equation serves as a simply model for scalar curvature equations
in hypersurface geometry 
\begin{equation}
\sigma_{2}(\kappa_{1}(x),\cdots,\kappa_{n}(x))=f(x,\nu(x))>0,\begin{array}{ccc}
 & x\in B_{1}\subset\mathbb{R}^{n}\end{array}\label{eq:scalar}
\end{equation}

here $\kappa_{1},\cdots,\kappa_{n}$ are the principal curvatures
and $\nu$ the normal of the given hypersurface as a graph over a
ball $B_{1}\subset\mathbb{R}^{n}$. A major breakthrough was made
by Warren-Yuan \cite{warren2009hessian}, they obtained $C^{2}$ interior
estimate for the equation 
\begin{equation}
\sigma_{2}(D^{2}u)=1.\begin{array}{ccc}
 & x\in B_{1}\subset\mathbb{R}^{3}\end{array}\label{eq:specialLag}
\end{equation}
The purely interior $C^{2}$ estimates for solutions of equations
(\ref{eq:sigma2}) and (\ref{eq:scalar}) with certain convexity constraints
were obtained recently by McGonagle-Song-Yuan in \cite{MSY} and Guan
and the author in \cite{GuanQiu}. 

Now we state our main result in this paper 
\begin{thm}
Let $u$ be a smooth solution to (\ref{eq:sigma2}) on $B_{10}\subset\mathbb{R}^{3}$
with $\Delta u>0$. Then we have 
\begin{equation}
\sup_{B_{\frac{1}{2}}}|D^{2}u|\leq C,
\end{equation}
where $C$ depends only on $\|f\|_{C^{2}}$, $\|\frac{1}{f}\|_{L^{\infty}}$
and $||u||_{C^{1}}$.
\end{thm}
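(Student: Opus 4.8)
The plan is to run the integral scheme of Warren--Yuan with a variable right-hand side. \emph{Reduction.} First I would observe that, since $\sigma_1(D^2u)=\Delta u>0$ and $\sigma_2(D^2u)=f>0$, the ordered eigenvalues $\lambda_1\ge\lambda_2\ge\lambda_3$ of $D^2u$ lie in the cone $\Gamma_2=\{\sigma_1>0,\ \sigma_2>0\}$, where an elementary case analysis gives $\lambda_1\ge\lambda_2\ge 0$ and $\lambda_2+\lambda_3\ge 0$; hence $0\le\lambda_1\le\Delta u$ and $|D^2u|\le C\lambda_1$, so it suffices to bound the largest eigenvalue $\lambda_1$ on $B_{1/2}$. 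Fixing a point where the estimate is wanted, after an arbitrarily small rotation of coordinates one may assume $\lambda_1$ is simple nearby, so that $b:=\log\sqrt{1+\lambda_1^2}$ is smooth; at points of higher multiplicity one uses instead the smooth symmetric surrogate $\frac12\log\det\bigl(I+(D^2u)^2\bigr)$, which dominates $\frac12\log(1+\lambda_1^2)$ and is what actually enters the integral estimates.

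\emph{Geometry and the Jacobi inequality.} Next I would bring in the induced metric $g_{ij}=\delta_{ij}+u_{ik}u_{kj}$ on the gradient graph $\{(x,Du(x))\}\subset\mathbb R^6$, with volume form $\sqrt{\det g}=\prod_i\sqrt{1+\lambda_i^2}$ and Laplace--Beltrami operator $\Delta_g$, together with the linearized operator $\mathcal Lv:=a^{ij}\partial_{ij}v$, $a^{ij}:=\Delta u\,\delta_{ij}-u_{ij}$, whose coefficient tensor is divergence-free, $\partial_i a^{ij}=0$, and satisfies $a^{ij}u_{ij}=(\Delta u)^2-|D^2u|^2=2f$. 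The central step is to establish a \emph{Jacobi inequality}
$$\mathcal L b\ \ge\ \varepsilon\,a^{ij}\partial_i b\,\partial_j b\ -\ C\qquad\text{on }\{\lambda_1\ge M\},$$
with a dimensional $\varepsilon=\varepsilon(3)>0$ and $M,C$ depending only on $\|f\|_{C^2}$, $\|1/f\|_{L^\infty}$, $\|u\|_{C^1}$ (and an analogous one for $\Delta_g$). This comes from differentiating $\sigma_2(D^2u)=f$ once and twice, using the concavity of $\sigma_2^{1/2}$ on $\Gamma_2$ for the second-variation term in the third derivatives, and invoking the special three-dimensional identities for $\sigma_2$ to extract the coercive term $\varepsilon\,a^{ij}\partial_i b\,\partial_j b$ exactly as when $f\equiv1$. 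The genuinely new terms are the chain-rule contributions from $f=f(x,u,Du)$; the only ones not manifestly lower order are those carrying a Hessian-order factor $f_{p_k}u_{ki}$, and these are tamed by the observation that
$$g^{ij}\,(f_{p_k}u_{ki})(f_{p_l}u_{lj})=f_p^{\!\top}\,W\,(I+W^2)^{-1}W\,f_p\ \le\ |D_pf|^2,\qquad W:=D^2u,$$
with the same bound for $a^{ij}$ once $\lambda_1\ge M$. Every remaining $f$-term is then truly lower order and is absorbed into $C$.

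\emph{Mean value iteration.} Granting the Jacobi inequality, I would finish with a Moser iteration on $(B_1,g)$. Testing it against $\varphi^2$ and integrating by parts, the divergence-free structure $\partial_i a^{ij}=0$ (with the equation and the $C^1$ bound on $u$) gives the a priori energy estimate $\int a^{ij}\partial_i b\,\partial_j b\,\varphi^2\le C$ and, similarly, an a priori bound on the volume of intrinsic balls of $(B_1,g)$, hence a Sobolev inequality on $(B_1,g)$ with controlled constant. Feeding the Jacobi inequality into this Sobolev inequality and iterating $L^p\to L^{\kappa p}$ then bounds $\sup_{B_{1/2}}b$ by $\|e^{b}\|_{L^{p_0}(B_1,\,dv_g)}$ for a fixed $p_0$, which the energy and volume bounds already control; this gives $\sup_{B_{1/2}}(1+\lambda_1^2)\le C$, and the reduction above completes the proof.

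\emph{Main difficulty.} The hard part is the Jacobi inequality: carrying out the full second-order differentiation of the $\sigma_2$ equation in dimension three, isolating the positive term $\varepsilon\,a^{ij}\partial_i b\,\partial_j b$ via the three-dimensional algebra, and checking that after the metric renormalization above every error generated by the $(x,u,Du)$-dependence of $f$ is lower order or absorbable --- the $Du$-dependence being the only place where this is not immediate. A secondary technical point is to make the volume estimate, the Sobolev inequality on $(B_1,g)$, and the Moser iteration quantitative and uniform in $u$.
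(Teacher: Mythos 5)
There is a genuine gap in the final step. Your proposal rests on the claim that a volume bound on $(B_1,g)$ ``hence'' yields a Sobolev inequality with controlled constant on the graph, so that Moser iteration can run as in Warren--Yuan. This is precisely the step that fails when $f$ is nonconstant. A volume bound alone does not give a Sobolev inequality on an intrinsic Riemannian ball; what one actually has available is the Michael--Simon (Euclidean ambient) or Hoffman--Spruck (Riemannian ambient) Sobolev inequality for submanifolds, and both of these carry an additional \emph{mean-curvature} term. In the Euclidean ambient $\mathbb{R}^3\times\mathbb{R}^3$, the graph $(x,Du)$ solving $\sigma_2(D^2u)=f$ is \emph{not} minimal: its mean curvature involves $D^3u$, which you do not control a priori. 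The paper's device of replacing the ambient metric by $fdx^2+dy^2$ makes the mean curvature of the graph bounded by the data, but even so the paper explicitly notes that the extra term in the resulting Sobolev inequality is not straightforwardly absorbed by Warren--Yuan's argument; this is exactly why the paper avoids the Sobolev inequality altogether.

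What the paper does instead, and what your proposal misses, is twofold. First, it proves only a \emph{mean-value inequality} (a monotonicity-of-ratios statement, Theorem~\ref{thm-meanvalue}), which requires strictly less than a full Sobolev inequality and is proved by a Michael--Simon-type monotonicity computation where the bounded mean curvature (with respect to $fdx^2+dy^2$) enters harmlessly. Second, to replace Moser iteration it proves pointwise maximum-principle doubling estimates (Lemmas~\ref{lem4} and \ref{lem5}) for $\log\Delta u$ and $\log(f\sigma_1-\sigma_3)$ via the test function $P=2\log\rho+g(x\cdot Du-u)+\log\log\sigma_1$, and then combines the mean-value inequality with the \emph{integral} form of the Jacobi inequality, using a Cauchy--Schwarz/cutoff bookkeeping to bound $\int\sigma_1\log\sigma_1$ by a constant, not by running $L^p\to L^{\kappa p}$. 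A secondary difference: you take $b=\tfrac12\log(1+\lambda_1^2)$ (requiring the rotation-regularization of Warren--Yuan), while the paper works with $b=\log\Delta u$ throughout, which sidesteps the non-smoothness of $\lambda_1$. You also propose to absorb all $f$-terms into $C$ in the Jacobi inequality; the paper keeps the term $\sum_i f_{u_i}b_i$ explicit precisely because it must be cancelled later using the first-order critical-point relation, and treating it as lower order would not close the argument.
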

In order to introduce our idea, let us briefly review the ideas for
attacking this problem so far. In two dimensional case, Heinz used
the Uniformization theorem to transform this interior estimate for
Monge-Ampere equation into the regularity of an elliptic system and
univalent of this mapping, see also \cite{heinz1956certain,lu2016weyl}
for more details. An anothor interesting proof using only maximum
principle was given by Chen-Han-Ou in \cite{chen2015interior,chen2016interior}.
Our quantity (\ref{eq:test}) as in \cite{GuanQiu} can give a new
proof of Heinz. The restriction for these methods is that they need
some convexity conditions which are not available in higher dimension.
In $\mathbb{R}^{3}$, a key observation made in \cite{warren2009hessian}
is that equation (\ref{eq:specialLag}) is exactly the special Lagrangian
equation which stems from the special Lagrangian geometry \cite{harvey1982calibrated}.
And an important property for the special Lagrangian equation is that
the Lagrangian graph $(x,Du)\subset\mathbb{R}^{n}\times\mathbb{R}^{n}$
is a minimal surface which has mean value inequality. So Wang-Yuan
\cite{wang2014hessian} can also prove interior Hessian estimates
for higher dimensional special Lagrangian equations. The new observation
in this paper is that the graph $(x,Du)$ where $u$ satisfied equation
(\ref{eq:sigma2}) can be viewed as a submanifold in $(\mathbb{R}^{3}\times\mathbb{R}^{3},fdx^{2}+dy^{2})$
with bounded mean curvature. Then instead of using Michael-Simon's
mean value inequality \cite{michael1973sobolev} as in \cite{warren2009hessian},
we prove a mean value inequality for Riemannian submanifolds which
is implied in Hoffman-Spruck's paper \cite{hoffman1974sobolev} in
order to remove the convexity condition in \cite{GuanQiu}. There
also have sobolev inequalities as in \cite{michael1973sobolev} and
\cite{hoffman1974sobolev}. But it seems not easy to estimate the
additional term in the sobolev inequalities by using only Warren-Yuan's
arguement. The other innovation part is that we can avoid using the
sobolev inequality by combining our new maximum principle method in
\cite{GuanQiu} to solve the problem for the equations (\ref{eq:sigma2})
in $\mathbb{R}^{3}$. 

The scalar curvature equations and the higher dimensional case for
$\sigma_{2}$ Hessian equations are still open to us.

\section{Preliminary Lemmas}

We call a solution of the equation (\ref{eq:sigma2}) is admissible,
if $u$ is smooth and $D^{2}u\in\Gamma_{2}:=\{A:\sigma_{1}(A)>0,\sigma_{2}(A)>0\}$.
It follows from \cite{caffarelli1985dirichlet}, if $D^{2}u\in\Gamma_{2}$,
then $\sigma_{2}^{ij}:=\frac{\partial\sigma_{2}}{\partial u_{ij}}$
is positive definite. So the Hessian estimates can be reduced to the
estimate of $\Delta u$ due to the following fact 
\begin{equation}
\max|D^{2}u|\leq\Delta u.\label{eq:lap}
\end{equation}
In the rest of this article, we will denote $C$ to be constant under
control (depending only on $\|f\|_{C^{2}}$, $\|\frac{1}{f}\|_{L^{\infty}}$
and $\|u\|_{C^{1}}$), which may change line by line.  
\begin{lem}
\label{lem1}Suppose $u$ satisfies the equation (\ref{eq:sigma2}),
we have the following equations 
\begin{equation}
\sigma_{2}^{kl}u_{kli}=f_{i},\label{eq:f1}
\end{equation}
and 
\begin{equation}
\sigma_{2}^{kl}u_{klii}+\sum_{k\neq l}u_{kki}u_{lli}-\sum_{k\neq l}u_{kli}^{2}=f_{ii}.\label{eq:f2}
\end{equation}
If $f$ is a form with gradient term, there are estimates

\begin{equation}
|Df|\leq C(1+\Delta u),\label{eq:gradient}
\end{equation}
and 
\begin{equation}
-C(1+\Delta u)^{2}+\sum_{p}f_{u_{p}}u_{pij}\leq f_{ij}\leq C(1+\Delta u)^{2}+\sum_{p}f_{u_{p}}u_{pij}\label{eq:Hessianf}
\end{equation}
\end{lem}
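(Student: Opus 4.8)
The plan is to derive all four assertions by directly differentiating the equation $\sigma_{2}(D^{2}u)=f$ and tracking the powers of the Hessian that appear. For (\ref{eq:f1}), I would differentiate the equation in $x_{i}$: since $\sigma_{2}$ is a quadratic polynomial in the entries of $D^{2}u$, the chain rule gives $\partial_{x_{i}}\bigl(\sigma_{2}(D^{2}u)\bigr)=\sigma_{2}^{kl}u_{kli}$, and this equals the total $x_{i}$-derivative of $f$, which I write as $f_{i}$; when $f=f(x,u,Du)$ this unpacks to $f_{i}=f_{x_{i}}+f_{u}u_{i}+\sum_{p}f_{u_{p}}u_{pi}$.

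For (\ref{eq:f2}), differentiate (\ref{eq:f1}) once more in $x_{i}$. Because $\sigma_{2}(A)=\frac{1}{2}\bigl((\mathrm{tr}\,A)^{2}-|A|^{2}\bigr)$ is quadratic, the tensor $\sigma_{2}^{kl,rs}=\partial^{2}\sigma_{2}/\partial u_{kl}\partial u_{rs}$ is constant, so $\sigma_{2}^{kl,rs}u_{rsi}u_{kli}+\sigma_{2}^{kl}u_{klii}=f_{ii}$. Evaluating the first term gives $\sigma_{2}^{kl,rs}u_{rsi}u_{kli}=\bigl(\sum_{k}u_{kki}\bigr)^{2}-\sum_{k,l}u_{kli}^{2}$; the diagonal contributions $k=l$ cancel, leaving $\sum_{k\neq l}u_{kki}u_{lli}-\sum_{k\neq l}u_{kli}^{2}$, which is precisely (\ref{eq:f2}).

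For (\ref{eq:gradient}) and (\ref{eq:Hessianf}), which concern only the case $f=f(x,u,Du)$, I would expand the total derivatives explicitly. Along the graph $(x,u(x),Du(x))$, whose range is controlled by $\|u\|_{C^{1}}$, all first- and second-order derivatives of $f$ in its arguments are bounded by $C$ because $f\in C^{2}$. From $f_{i}=f_{x_{i}}+f_{u}u_{i}+\sum_{p}f_{u_{p}}u_{pi}$ together with (\ref{eq:lap}) one obtains $|Df|\le C+C|D^{2}u|\le C(1+\Delta u)$, which is (\ref{eq:gradient}). Differentiating once more, $f_{ij}$ splits into: second-order derivatives of $f$ in $(x,u,u_{p})$ (size $C$); terms linear in $D^{2}u$ such as $f_{u}u_{ij}$ and $f_{x_{i}u_{q}}u_{qj}$ (size $C(1+\Delta u)$); terms quadratic in $D^{2}u$ such as $f_{u_{p}u_{q}}u_{pi}u_{qj}$ (size $C(1+\Delta u)^{2}$); and the single third-order term $\sum_{p}f_{u_{p}}u_{pij}$. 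Collecting everything but the last term into an error bounded by $C(1+\Delta u)^{2}$ yields (\ref{eq:Hessianf}).

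I do not expect a genuine obstacle here; the lemma is a bookkeeping exercise. The one place requiring care is that, since $f$ depends on $x$, $u$ and $Du$, every $x_{i}$-derivative above must be understood as a total derivative, and in the second differentiation the chain rule also hits the factors $u_{i}$ and $u_{pi}$, reproducing Hessian terms --- this is exactly what produces the $(1+\Delta u)^{2}$ terms in (\ref{eq:Hessianf}). The reduction (\ref{eq:lap}) is what converts the bounds on $|D^{2}u|$ into bounds in terms of $\Delta u$.
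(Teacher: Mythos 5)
Your proposal is correct and takes essentially the same route as the paper: differentiate $\sigma_{2}(D^{2}u)=f$ once and twice, use that $\sigma_{2}$ is quadratic so $\sigma_{2}^{kl,rs}$ is a constant tensor (the paper lists its nonzero components explicitly, you contract $\sigma_{2}=\tfrac12((\operatorname{tr}A)^{2}-|A|^{2})$, which is the same calculation), then expand $f_{i}$ and $f_{ij}$ by the chain rule and bound all terms via $\|f\|_{C^{2}}$, $\|u\|_{C^{1}}$ and $\max|D^{2}u|\le\Delta u$. No gaps.
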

\begin{proof}
The equations (\ref{eq:f1}) and (\ref{eq:f2}) follows from twice
differential of the equation $\sigma_{2}(D^{2}u)=f$ and the elementary
fact that 

\begin{equation}
\begin{array}{cc}
\frac{\partial^{2}\sigma_{2}}{\partial u_{kk}\partial u_{ll}}=1 & for\,\,k\neq l,\\
\frac{\partial^{2}\sigma_{2}}{\partial u_{kl}\partial u_{kl}}=-1 & for\,\,k\neq l,\\
\frac{\partial^{2}\sigma_{2}}{\partial u_{pq}\partial u_{kl}}=0 & otherwise.
\end{array}
\end{equation}
Moreover, by (\ref{eq:lap}) and these direct computations 
\begin{equation}
f_{i}=f_{x_{i}}+f_{u}u_{i}+f_{u_{p}}u_{pi},
\end{equation}

and 
\begin{eqnarray}
f_{ij} & = & f_{x_{i}x_{j}}+f_{x_{i}u}u_{j}+f_{x_{i}u_{p}}u_{pj}+f_{ux_{j}}u_{i}+f_{uu}u_{i}u_{j}+f_{uu_{p}}u_{pj}u_{i}\\
 &  & +f_{u}u_{ij}+f_{u_{p}x_{j}}u_{pi}+f_{u_{p}u}u_{j}u_{pi}+f_{u_{p}u_{l}}u_{lj}u_{pi}+f_{u_{p}}u_{pij},
\end{eqnarray}

we get the estimates (\ref{eq:gradient}) and (\ref{eq:Hessianf}). 
\end{proof}
The second lemma is from \cite{lin1994some}. 
\begin{lem}
\label{lemma2} Suppose $W\in\Gamma_{2}$ is diagonal and $W_{11}\ge\cdots\ge W_{nn}$,
then there exist $c_{1}>0$ and $c_{2}>0$ depending only on $n$
such that 
\begin{equation}
\sigma_{2}^{11}(W)W_{11}\geq c_{1}\sigma_{2}(W),\label{tildec0}
\end{equation}
and for any $j\geq2$ 
\begin{equation}
\sigma_{2}^{jj}(W)\geq c_{2}\sigma_{1}(W).\label{s222}
\end{equation}

\end{lem}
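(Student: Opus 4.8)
The plan is to exploit the explicit combinatorial structure of $\sigma_2$ together with the ordering $W_{11}\ge\cdots\ge W_{nn}$. Recall that for a diagonal matrix $W$ one has $\sigma_2(W)=\sum_{i<j}W_{ii}W_{jj}$ and $\sigma_2^{jj}(W)=\partial\sigma_2/\partial W_{jj}=\sum_{k\neq j}W_{kk}=\sigma_1(W)-W_{jj}$. First I would establish \eqref{s222}: since $j\ge 2$ we have $W_{jj}\le W_{11}$, but more usefully, because $W\in\Gamma_2$ forces $\sigma_1(W)>0$ and $W_{jj}$ cannot be too close to $\sigma_1(W)$. Indeed, if $W_{jj}$ were almost equal to $\sigma_1(W)$ then all the remaining $W_{kk}$ ($k\neq j$) would sum to something tiny; combined with $W_{jj}\le W_{11}$ and the ordering this squeezes $\sigma_2(W)$ to be nonpositive, contradicting $W\in\Gamma_2$. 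Quantitatively, I would show $\sigma_1(W)-W_{jj}=\sigma_2^{jj}(W)\ge c_2\sigma_1(W)$ by arguing that $W_{jj}\le(1-c_2)\sigma_1(W)$ for a dimensional constant $c_2$; the cleanest route is to note $\sum_{k\neq j}W_{kk}\ge W_{11}\ge \sigma_1(W)/n$ is false in general, so instead one uses that $2\sigma_2(W)=\sigma_1(W)^2-\sum_k W_{kk}^2>0$ gives $\sum_k W_{kk}^2<\sigma_1(W)^2$, hence $W_{jj}^2<\sigma_1(W)^2$ and, being careful about signs, $|W_{jj}|<\sigma_1(W)$; this already gives $\sigma_2^{jj}=\sigma_1-W_{jj}>0$, and a compactness/normalization argument (restrict to $\sigma_1(W)=1$ on the closed cone $\overline{\Gamma_2}$, where $\sigma_2^{jj}$ is continuous and positive on the relevant face) upgrades this to a uniform lower bound $c_2>0$.

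For \eqref{tildec0}, I would similarly normalize $\sigma_1(W)=1$ and work on the slice $K=\{W\in\overline{\Gamma_2}:\sigma_1(W)=1,\ W_{11}\ge\cdots\ge W_{nn}\}$, which is compact. On $K$ the function $W\mapsto \sigma_2^{11}(W)W_{11}=W_{11}(1-W_{11})$ and $W\mapsto\sigma_2(W)$ are both continuous, and the inequality is homogeneous of degree $2$, so it suffices to find $c_1>0$ with $\sigma_2^{11}(W)W_{11}\ge c_1\sigma_2(W)$ on $K$. The only way this could fail on $K$ is if there is a point where $\sigma_2(W)>0$ but $W_{11}(1-W_{11})=0$, i.e.\ $W_{11}=0$ (impossible, since $W_{11}\ge\sigma_1/n=1/n>0$ by the ordering) or $W_{11}=1=\sigma_1(W)$, which forces $\sum_{j\ge2}W_{jj}=0$ and then $\sigma_2(W)=W_{11}\sum_{j\ge2}W_{jj}+\sum_{2\le i<j}W_{ii}W_{jj}=\sum_{2\le i<j}W_{ii}W_{jj}\le 0$ by Newton's inequality applied to $(W_{22},\dots,W_{nn})$ which sums to zero — contradiction unless $\sigma_2(W)=0$. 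Hence $\sigma_2^{11}W_{11}/\sigma_2(W)$ extends to a positive continuous function on the compact set $\{W\in K:\sigma_2(W)\ge\varepsilon\}$ for any $\varepsilon$, and a short argument handles $\sigma_2(W)\to 0$; taking the infimum yields $c_1>0$.

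The main obstacle is the boundary behavior: both quantities in \eqref{tildec0} degenerate on the boundary of $\Gamma_2$, so one must check that the \emph{ratio} stays bounded below, which is exactly where the ordering hypothesis $W_{11}\ge\cdots\ge W_{nn}$ and Newton's inequality $\binom{n-1}{2}\sigma_2(W_{22},\dots,W_{nn})\le\binom{n-1}{1}^2\sigma_1(W_{22},\dots,W_{nn})^2$ do the real work — without the ordering, $\sigma_2^{11}W_{11}$ could vanish while $\sigma_2$ does not. An alternative, fully elementary route that avoids compactness entirely: write $\sigma_2(W)=W_{11}\sigma_1(W_{22},\dots,W_{nn})+\sigma_2(W_{22},\dots,W_{nn})$ and $\sigma_2^{11}(W)W_{11}=W_{11}\sigma_1(W_{22},\dots,W_{nn})$, so \eqref{tildec0} is equivalent to $(1-c_1)W_{11}\sigma_1' \ge c_1\sigma_2'$ with $\sigma_1',\sigma_2'$ the symmetric functions of the lower $(n-1)$ variables; since $W_{11}$ dominates each of those variables, $W_{11}\sigma_1'\ge \frac{2}{n-1}$ times any pairwise product, giving the bound directly with an explicit $c_1=c_1(n)$. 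I expect to present this explicit version, as it also makes the constant's dependence on $n$ transparent and keeps the proof self-contained.
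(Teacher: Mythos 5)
The paper does not actually supply a proof of this lemma; it is stated as a citation to Lin--Trudinger \cite{lin1994some}, so there is no in-paper argument to compare yours against. That said, your proposal is essentially correct and the ``explicit version'' you sketch at the end is the right way to make it self-contained; I would drop the compactness detour (which, as you yourself note, runs into the $0/0$ boundary degeneracy for \eqref{tildec0}) and present only the elementary computation.

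Two small fixes to that computation. First, the intermediate claim ``$W_{11}\sigma_1'\ge \tfrac{2}{n-1}$ times any pairwise product'' is not quite the statement you want; the clean chain is: writing $\sigma_1'=\sigma_1(W_{22},\dots,W_{nn})=\sigma_2^{11}(W)>0$ and $\sigma_2'=\sigma_2(W_{22},\dots,W_{nn})$, one has $W_{11}\ge\max_{k\ge2}W_{kk}\ge\sigma_1'/(n-1)$, and Newton's inequality for the $n-1$ lower eigenvalues gives $\sigma_1'^2\ge\tfrac{2(n-1)}{n-2}\,\sigma_2'$, hence
\begin{equation}
W_{11}\sigma_1'\ \ge\ \frac{\sigma_1'^2}{n-1}\ \ge\ \frac{2}{n-2}\,\sigma_2'
\end{equation}
whenever $\sigma_2'>0$, while if $\sigma_2'\le0$ then $\sigma_2^{11}W_{11}=\sigma_2-\sigma_2'\ge\sigma_2$ trivially. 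Either way $\sigma_2=W_{11}\sigma_1'+\sigma_2'\le\tfrac{n}{2}W_{11}\sigma_1'$, giving \eqref{tildec0} with the explicit constant $c_1=2/n$. Second, for \eqref{s222} you can likewise avoid compactness altogether: since $\sigma_2\ge0$ forces $\sum_k W_{kk}^2\le\sigma_1^2$ and the ordering gives $2W_{22}^2\le W_{11}^2+W_{22}^2\le\sigma_1^2$, one gets $W_{jj}\le W_{22}\le\sigma_1/\sqrt{2}$ for all $j\ge2$, hence $\sigma_2^{jj}=\sigma_1-W_{jj}\ge(1-1/\sqrt{2})\sigma_1$, i.e.\ $c_2=1-1/\sqrt{2}$. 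With these two corrections your argument is complete, fully elementary, and arguably preferable to a bare citation since it exhibits the constants explicitly.
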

Let us consider the quantity of $b(x):=\log\Delta u$, we have
\begin{lem}
\label{lem3} If $u$ are admissible solutions of the equations (\ref{eq:sigma2})
in $\mathbb{R}^{3}$, we have
\begin{equation}
\sigma_{2}^{ij}b{}_{ij}\geq\frac{1}{100}\sigma_{2}^{ij}b{}_{i}b{}_{j}-C\Delta u+\sum_{i}f_{u_{i}}b_{i}.\label{eq:logb}
\end{equation}

\end{lem}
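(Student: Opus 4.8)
The plan is to differentiate the equation twice, feed the result into the linearized operator $\sigma_2^{ij}$ applied to $b=\log\Delta u$, and reduce everything to a single pointwise algebraic inequality for the third derivatives of $u$. Since $\Delta u>0$, the chain rule gives $b_{ij}=(\Delta u)_{ij}/\Delta u-b_ib_j$, hence
\[
\sigma_2^{ij}b_{ij}=\frac{1}{\Delta u}\,\sigma_2^{ij}(\Delta u)_{ij}-\sigma_2^{ij}b_ib_j .
\]
Because $\sigma_2^{ij}(\Delta u)_{ij}=\sum_i\sigma_2^{kl}u_{klii}$, summing (\ref{eq:f2}) over $i$ yields
\[
\sigma_2^{ij}(\Delta u)_{ij}=\Delta f+\sum_i\Big(\sum_{k\neq l}u_{kli}^2-\sum_{k\neq l}u_{kki}u_{lli}\Big)=:\Delta f+Q .
\]
Here $Q$ is minus the second variation of $\sigma_2$ summed over coordinate directions; its positive part $\sum_i\sum_{k\neq l}u_{kli}^2$ will be the source of the good gradient term.

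Next I dispose of the lower order term. In $\Gamma_2$ one has $\sigma_1^2\ge 3\sigma_2=3f$, so $\Delta u\ge c_0>0$ since $\|1/f\|_{L^\infty}\le C$; consequently $1+\Delta u\le C\Delta u$ and $(1+\Delta u)^2/\Delta u\le C\Delta u$. Summing (\ref{eq:Hessianf}) over $i=j$ and dividing by $\Delta u$,
\[
\frac{\Delta f}{\Delta u}\ge -C\Delta u+\sum_p f_{u_p}b_p ,
\]
which already produces the terms $-C\Delta u$ and $\sum_i f_{u_i}b_i$ of (\ref{eq:logb}). It therefore remains to prove the pointwise inequality $Q/\Delta u\ \ge\ \frac{101}{100}\,\sigma_2^{ij}b_ib_j-C\Delta u$.

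For this, fix a point and rotate so that $D^2u=\mathrm{diag}(\lambda_1,\lambda_2,\lambda_3)$ with $\lambda_1\ge\lambda_2\ge\lambda_3$; then $\sigma_2^{ij}$ is diagonal with $\sigma_2^{ii}=\sigma_1-\lambda_i>0$, $b_i=(\Delta u)_i/\Delta u$, and
\[
Q=\sum_{i,k,l}u_{ikl}^2-\sum_i(\Delta u)_i^2 ,\qquad \sigma_2^{ij}b_ib_j=\frac{1}{(\Delta u)^2}\sum_i(\sigma_1-\lambda_i)(\Delta u)_i^2 .
\]
I will use three facts about $\Gamma_2$ in dimension three: at most one $\lambda_i$ is negative (so $\sum_k|\lambda_k|\le C\sigma_1$); $0<\sigma_2^{ii}<2\sigma_1$ for each $i$, since $\sigma_2^{ij}>0$ and $\sum_i\sigma_2^{ii}=2\sigma_1$; and Lemma \ref{lemma2}, which gives $\sigma_2^{22},\sigma_2^{33}\ge c_2\sigma_1$ together with $\sigma_2^{11}\ge c_1\sigma_2/\lambda_1\ge c_1c_0/\sigma_1$. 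The delicate contribution to $\sigma_2^{ij}b_ib_j$ is that of $u_{11i}$, whose weight $\sigma_2^{11}$ may degenerate like $\sigma_1^{-1}$; the idea is to rewrite (\ref{eq:f1}) at the diagonal point as $\sum_k\sigma_2^{kk}u_{kki}=f_i$ with $|f_i|\le C\Delta u$, solve for $\sigma_2^{11}u_{11i}$ in terms of $u_{22i}$, $u_{33i}$ and $f_i$, and then bound $\sigma_2^{11}(\Delta u)_i^2$ by squared terms already present in $Q$ plus $C(\Delta u)^2$; on the other two directions the bound $\sigma_2^{ii}\le 2\sigma_1$ is harmless. Splitting into the regimes where $\sigma_2^{11}$ is, and is not, comparable to $\Delta u$, and assembling these estimates, should give the claimed inequality, the factor $\frac{1}{100}$ being a comfortable choice.

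All the difficulty is in this last step. A naive lower bound (for instance from the concavity of $\sigma_2^{1/2}$) only gives $Q\ge -C(\Delta u)^2$, i.e. $\sigma_2^{ij}b_{ij}\ge -\sigma_2^{ij}b_ib_j-C\Delta u+\sum_p f_{u_p}b_p$, in which the gradient term enters with the wrong sign; one must instead show that the positive part $\sum_i\sum_{k\neq l}u_{kli}^2$ of $Q$ is by itself comparable to $(\Delta u)\,\sigma_2^{ij}b_ib_j$ up to $C(\Delta u)^2$, beating also the indefinite cross terms $-\sum_{k\neq l}u_{kki}u_{lli}$. Extracting that positive multiple of $\sigma_2^{ij}b_ib_j$ is the genuinely three-dimensional phenomenon behind the theorem and the place where the bound on $1/f$ is used; the case analysis on the eigenvalue configuration, controlled via (\ref{eq:f1}) and Lemma \ref{lemma2}, is the technical core of the proof.
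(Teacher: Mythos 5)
Your reduction is correct and follows essentially the same route as the paper: you compute $\sigma_2^{ij}b_{ij}=\sigma_2^{ij}(\Delta u)_{ij}/\Delta u-\sigma_2^{ij}b_ib_j$, expand $\sigma_2^{ij}(\Delta u)_{ij}$ via (\ref{eq:f2}) into $\Delta f+Q$, absorb $\Delta f/\Delta u$ into $-C\Delta u+\sum_p f_{u_p}b_p$ using (\ref{eq:Hessianf}), and then reduce the whole lemma to the pointwise quadratic inequality $Q/\Delta u\ge\frac{101}{100}\sigma_2^{ij}b_ib_j-C\Delta u$ in diagonalized coordinates, to be handled with (\ref{eq:f1}) and Lemma~\ref{lemma2}. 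That is the same skeleton as the paper's proof, including the substitution of $u_{iii}$ via (\ref{eq:f1}) and the symmetry reduction to the three pairs such as $(u_{221},u_{331})$.

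However, the proposal stops precisely at what you yourself call ``the technical core of the proof.'' You never actually prove the quadratic-form inequality: the passage from ``solve for $\sigma_2^{11}u_{11i}$ \dots split into regimes \dots assembling these estimates, should give the claimed inequality'' is a plan, not a proof, and it is exactly here that a naive Cauchy--Schwarz fails. The paper's content at this point is the pair of elementary but delicate inequalities recorded as Claim~\ref{claim1} and Claim~\ref{claim2}: one must verify, after extracting $A_1$, that the $2\times 2$ quadratic form in $(u_{221},u_{331})$ dominates $\epsilon\sigma_1(u_{221}+u_{331})^2/\sigma_2^{11}$ and $(1+\delta)[(\lambda_2-\lambda_1)u_{221}+(\lambda_3-\lambda_1)u_{331}]^2/(\sigma_1\sigma_2^{11})$ with $\epsilon$ and $\delta$ in the admissible ranges $\epsilon\le 2/5$, $\delta\le 1/20$ compatible with $1+\delta\ge(1+2\epsilon)/(1-\epsilon)$. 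Proving Claim~\ref{claim2} requires the explicit expansion in which $[(1-\delta)\lambda_1^2+(3+\delta)f]^2$ cancels exactly, followed by sign analysis of the remaining terms (using $\lambda_2\lambda_3\le f$, $\lambda_2+\lambda_3>0$, etc.). None of this is present in your write-up, and it is not a routine consequence of the facts you list about $\Gamma_2$; without carrying out that computation, the lemma is not established. In short: correct setup, same approach as the paper, but the heart of the argument is missing.
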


\begin{proof}
We may assume that $\{u_{ij}\}$ is diagonal. The differential equation
of $b$ by using (\ref{eq:f2}) is 
\begin{eqnarray}
A:=\sigma_{2}^{ij}b_{ij}-\epsilon\sigma_{2}^{ij}b_{i}b_{j} & = & \frac{\sigma_{2}^{ij}(\Delta u){}_{ij}}{\sigma_{1}}-\frac{(1+\epsilon)\sigma_{2}^{ii}(\sum_{k}u_{kki})^{2}}{\sigma_{1}^{2}}\\
 & = & \frac{\sum\limits _{i}(\sum_{k\neq p}u_{kpi}^{2}-\sum_{k\neq p}u_{kki}u_{ppi})+\Delta f}{\sigma_{1}}\\
 &  & -\frac{(1+\epsilon)\sigma_{2}^{ii}(\sum_{k}u_{kki})^{2}}{\sigma_{1}^{2}}.
\end{eqnarray}

We use (\ref{eq:f1}) to substitute terms with $u_{iii}$ in $A$,
\begin{eqnarray}
A & = & \frac{6u_{123}^{2}}{\sigma_{1}}+\frac{2\sum_{k\neq p}u_{kpp}^{2}}{\sigma_{1}}-\frac{2\sum_{k\neq p}u_{kkp}u_{ppp}}{\sigma_{1}}\\
 &  & -\frac{2u_{113}u_{223}+2u_{112}u_{332}+2u_{221}u_{331}}{\sigma_{1}}\\
 &  & +\frac{\triangle f}{\sigma_{1}}-\frac{(1+\epsilon)\sigma_{2}^{ii}(\sum_{k\neq i}u_{kki}+u_{iii})^{2}}{\sigma_{1}^{2}}\\
 & \geq & \frac{2(u_{211}^{2}+u_{311}^{2}+u_{122}^{2}+u_{322}^{2}+u_{133}^{2}+u_{233}^{2})}{\sigma_{1}}\\
 &  & -\frac{2(u_{221}+u_{331})(f_{1}-\sigma_{2}^{22}u_{221}-\sigma_{2}^{33}u_{331})}{\sigma_{1}\sigma_{2}^{11}}\\
 &  & -\frac{2(u_{112}+u_{332})(f_{2}-\sigma_{2}^{11}u_{112}-\sigma_{2}^{33}u_{332})}{\sigma_{1}\sigma_{2}^{22}}\\
 &  & -\frac{2(u_{113}+u_{223})(f_{3}-\sigma_{2}^{11}u_{113}-\sigma_{2}^{22}u_{223})}{\sigma_{1}\sigma_{2}^{33}}\\
 &  & -\frac{2u_{113}u_{223}+2u_{112}u_{332}+2u_{221}u_{331}}{\sigma_{1}}\\
 &  & -\frac{(1+\epsilon)[\sum_{k\neq i}(\sigma_{2}^{ii}-\sigma_{2}^{kk})u_{kki}+f_{i}]^{2}}{\sigma_{1}^{2}\sigma_{2}^{ii}}\\
 &  & -C\sigma_{1}+\sum_{i}f_{u_{i}}b_{i}.
\end{eqnarray}
Then we can write explicitly of the second last term and use Cauchy-Schwarz
inequality and Lemma \ref{lemma2}, 
\begin{eqnarray}
-\frac{(1+\epsilon)[\sum_{k\neq i}(\sigma_{2}^{ii}-\sigma_{2}^{kk})u_{kki}+f_{i}]^{2}}{\sigma_{1}^{2}\sigma_{2}^{ii}} & \geq\\
-\frac{(1+2\epsilon)[(\lambda_{2}-\lambda_{1})u_{221}+(\lambda_{3}-\lambda_{1})u_{331}]^{2}}{\sigma_{1}^{2}\sigma_{2}^{11}}\\
-\frac{(1+2\epsilon)[(\lambda_{1}-\lambda_{2})u_{112}+(\lambda_{3}-\lambda_{2})u_{332}]^{2}}{\sigma_{1}^{2}\sigma_{2}^{22}}\\
-\frac{(1+2\epsilon)[(\lambda_{2}-\lambda_{3})u_{223}+(\lambda_{1}-\lambda_{3})u_{113}]^{2}}{\sigma_{1}^{2}\sigma_{2}^{33}}\\
-\frac{C}{\epsilon}\sigma_{1}.
\end{eqnarray}
 Due to symmetry, we only need to give the lower bound of the terms
which contain $u_{221}$ and $u_{331}$. We denote these terms by
$A_{1}$. 
\begin{eqnarray}
A_{1} & := & \frac{2u_{221}^{2}}{\sigma_{1}}+\frac{2u_{331}^{2}}{\sigma_{1}}-\frac{2(u_{221}+u_{331})f_{1}}{\sigma_{1}\sigma_{2}^{11}}\\
 &  & +\frac{2\sigma_{2}^{22}u_{221}^{2}}{\sigma_{1}\sigma_{2}^{11}}+\frac{2\sigma_{2}^{33}u_{331}^{2}}{\sigma_{1}\sigma_{2}^{11}}+\frac{2(\sigma_{2}^{22}+\sigma_{2}^{33})u_{221}u_{331}}{\sigma_{1}\sigma_{2}^{11}}\\
 &  & -\frac{2u_{221}u_{331}}{\sigma_{1}}-\frac{(1+2\epsilon)[(\lambda_{2}-\lambda_{1})u_{221}+(\lambda_{3}-\lambda_{1})u_{331}]^{2}}{\sigma_{1}^{2}\sigma_{2}^{11}}.
\end{eqnarray}
By Cauchy-Schwarz and Lemma \ref{lemma2} we have 
\begin{eqnarray}
-\frac{2(u_{221}+u_{331})f_{1}}{\sigma_{1}\sigma_{2}^{11}} & \geq & -\frac{2\epsilon^{2}\sigma_{1}(u_{221}+u_{331})^{2}}{\sigma_{1}\sigma_{2}^{11}}-\frac{f_{1}^{2}}{2\epsilon^{2}\sigma_{2}^{11}\sigma_{1}^{2}}\\
 & \geq & -\frac{2\epsilon^{2}\sigma_{1}(u_{221}+u_{331})^{2}}{\sigma_{1}\sigma_{2}^{11}}-\frac{C}{\epsilon^{2}}\sigma_{1}.
\end{eqnarray}
Then we get 
\begin{eqnarray}
A_{1} & \geq & \frac{2\sigma_{2}^{11}+2\sigma_{2}^{22}}{\sigma_{1}\sigma_{2}^{11}}u_{221}^{2}+\frac{2\sigma_{2}^{11}+2\sigma_{2}^{33}}{\sigma_{1}\sigma_{2}^{11}}u_{331}^{2}\\
 &  & +\frac{4\lambda_{1}}{\sigma_{1}\sigma_{2}^{11}}u_{221}u_{331}-\frac{2\epsilon^{2}\sigma_{1}(u_{221}+u_{331})^{2}}{\sigma_{1}\sigma_{2}^{11}}\\
 &  & -\frac{(1+2\epsilon)[(\lambda_{2}-\lambda_{1})u_{221}+(\lambda_{3}-\lambda_{1})u_{331}]^{2}}{\sigma_{1}^{2}\sigma_{2}^{11}}\\
 &  & -\frac{C}{\epsilon^{2}}\sigma_{1}.
\end{eqnarray}
We will prove that $A_{1}\geq-\frac{C}{\epsilon^{2}}\sigma_{1}$ in
the following elementary but tedious two claims \ref{claim1} and
\ref{claim2}. Then we choose $\epsilon=\frac{1}{100}$, such that
\begin{equation}
1+\delta\geq\frac{1+2\epsilon}{1-\epsilon},
\end{equation}
where $\delta$ is small constant in the Claim \ref{claim2}. 

In all, we will get 
\begin{equation}
A\geq-C\sigma_{1}+\sum_{i}f_{u_{i}}b_{i}.
\end{equation}
\end{proof}
\begin{claim}
\label{claim1} For fix $\epsilon\leq\frac{2}{5}$, we have 
\begin{equation}
\frac{2\sigma_{2}^{11}+2\sigma_{2}^{22}}{\sigma_{1}\sigma_{2}^{11}}u_{221}^{2}+\frac{2\sigma_{2}^{11}+2\sigma_{2}^{33}}{\sigma_{1}\sigma_{2}^{11}}u_{331}^{2}+\frac{4\lambda_{1}}{\sigma_{1}\sigma_{2}^{11}}u_{221}u_{331}\geq\frac{2\epsilon\sigma_{1}(u_{221}+u_{331})^{2}}{\sigma_{1}\sigma_{2}^{11}}.
\end{equation}
\end{claim}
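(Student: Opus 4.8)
The plan is to clear the positive denominator and read the asserted inequality as the positive semidefiniteness of a $2\times2$ quadratic form in the two free third derivatives $u_{221},u_{331}$. Since $W\in\Gamma_{2}$ forces $\sigma_{1}>0$ and $\sigma_{2}^{11}=\lambda_{2}+\lambda_{3}>0$, I would multiply the claimed inequality by $\tfrac12\sigma_{1}\sigma_{2}^{11}>0$, reducing it to
\[
(\sigma_{2}^{11}+\sigma_{2}^{22})u_{221}^{2}+(\sigma_{2}^{11}+\sigma_{2}^{33})u_{331}^{2}+2\lambda_{1}u_{221}u_{331}\ \geq\ \epsilon\sigma_{1}(u_{221}+u_{331})^{2}.
\]
Using $\sigma_{2}^{ii}=\sigma_{1}-\lambda_{i}$, so that $\sigma_{2}^{11}+\sigma_{2}^{22}=\sigma_{1}+\lambda_{3}$ and $\sigma_{2}^{11}+\sigma_{2}^{33}=\sigma_{1}+\lambda_{2}$, this is exactly $Q(x,y):=ax^{2}+2cxy+by^{2}\geq0$ for all $x=u_{221},\,y=u_{331}\in\mathbb{R}$, where
\[
a=(1-\epsilon)\sigma_{1}+\lambda_{3},\qquad b=(1-\epsilon)\sigma_{1}+\lambda_{2},\qquad c=\lambda_{1}-\epsilon\sigma_{1}.
\]
Hence it suffices to check $a\geq0$, $b\geq0$ and $ab-c^{2}\geq0$; once $a>0$ one has $Q=a\bigl(x+\tfrac{c}{a}y\bigr)^{2}+\tfrac{ab-c^{2}}{a}y^{2}\geq0$.

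For $a\geq0$ the only nontrivial case is $\lambda_{3}<0$, and here I would use \emph{both} $\Gamma_{2}$-constraints. From $\lambda_{1}+\lambda_{2}=\sigma_{2}^{33}>0$ and $0<\sigma_{2}=\lambda_{1}\lambda_{2}-|\lambda_{3}|(\lambda_{1}+\lambda_{2})$ one gets $\lambda_{1}\lambda_{2}>|\lambda_{3}|(\lambda_{1}+\lambda_{2})$; combining with $\lambda_{1}\lambda_{2}\leq\tfrac14(\lambda_{1}+\lambda_{2})^{2}$ gives $\lambda_{1}+\lambda_{2}>4|\lambda_{3}|$, hence $\sigma_{1}=(\lambda_{1}+\lambda_{2})-|\lambda_{3}|>3|\lambda_{3}|$, i.e.\ $\lambda_{3}>-\tfrac13\sigma_{1}$. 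Since $\epsilon\leq\tfrac25$ we have $(1-\epsilon)\sigma_{1}\geq\tfrac35\sigma_{1}>|\lambda_{3}|$, so $a>0$; and $b-a=\lambda_{2}-\lambda_{3}\geq0$ gives $b\geq a>0$. For the determinant, set $s:=\sigma_{2}^{11}=\lambda_{2}+\lambda_{3}$, so $\sigma_{1}=\lambda_{1}+s$ and $\lambda_{2}\lambda_{3}=\sigma_{2}-\lambda_{1}s$; a short expansion in which the $\lambda_{1}^{2}$-terms cancel yields
\[
ab-c^{2}=3(1-\epsilon)\lambda_{1}s+(2-3\epsilon)s^{2}+\lambda_{2}\lambda_{3}=(2-3\epsilon)s(\lambda_{1}+s)+\sigma_{2}=(2-3\epsilon)\sigma_{2}^{11}\sigma_{1}+\sigma_{2},
\]
which is positive because $\epsilon\leq\tfrac25<\tfrac23$ and $\sigma_{2}^{11},\sigma_{1},\sigma_{2}>0$. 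Thus $Q$ is positive semidefinite, which is the claim.

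The only genuinely delicate point is the bound $\lambda_{3}\geq-\tfrac13\sigma_{1}$ needed for $a\geq0$: this is \emph{not} implied by $\sigma_{2}^{11}>0$ alone, and really requires $\sigma_{2}>0$ together with the AM--GM step above; everything else is routine bookkeeping. It is worth noting that the determinant collapses cleanly to $(2-3\epsilon)\sigma_{2}^{11}\sigma_{1}+\sigma_{2}$, so in fact the argument goes through for every $\epsilon\leq\tfrac23$, which is more than the stated range $\epsilon\leq\tfrac25$.
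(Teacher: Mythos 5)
Your proof is correct, and it takes a cleaner route than the paper's. Both arguments reduce the claim to positive semidefiniteness of the $2\times2$ form with $a=(1-\epsilon)\sigma_1+\lambda_3$, $b=(1-\epsilon)\sigma_1+\lambda_2$, $c=\lambda_1-\epsilon\sigma_1$, and both then study $ab-c^2$. The paper, however, estimates $ab-c^2$ through a chain of inequalities that shuffles $f$, $\lambda_2^2+\lambda_3^2$, and $\lambda_2\lambda_3$ around, ultimately landing on the bound $2(1-\epsilon)^2|\lambda_2\lambda_3|-\epsilon(1-2\epsilon)\lambda_2\lambda_3\geq 0$; you instead observe that the $\lambda_1^2$-terms cancel exactly and obtain the closed-form identity $ab-c^2=(2-3\epsilon)\sigma_2^{11}\sigma_1+\sigma_2$, which is immediately positive for $\epsilon<\tfrac23$ and makes the true admissible range transparent. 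You are also more careful on a point the paper leaves implicit: nonnegativity of the determinant alone does not give positive semidefiniteness, so one must also control either the trace (the paper's implicit route, since $a+b=2(1-\epsilon)\sigma_1+\sigma_2^{11}>0$) or a diagonal entry (your route, via the $\Gamma_2$-derived bound $\lambda_3>-\tfrac13\sigma_1$). Your derivation of $\lambda_3>-\tfrac13\sigma_1$ from $\sigma_2>0$, $\sigma_2^{33}>0$, and AM--GM is correct and is a nice explicit estimate; your remark that the whole argument actually persists up to $\epsilon\leq\tfrac23$ is also accurate and sharper than the stated $\epsilon\leq\tfrac25$.
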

\begin{proof}
This claim follows from elementary inequality 
\begin{eqnarray}
(\sigma_{2}^{11}+\sigma_{2}^{22}-\epsilon\sigma_{1})(\sigma_{2}^{11}+\sigma_{2}^{33}-\epsilon\sigma_{1})-(\lambda_{1}-\epsilon\sigma_{1})^{2}\\
=(1-\epsilon)^{2}\sigma_{1}^{2}+(1-\epsilon)\sigma_{1}(\lambda_{2}+\lambda_{3})+\lambda_{2}\lambda_{3}-(\lambda_{1}-\epsilon\sigma_{1})^{2}\\
\geq(1-\epsilon)^{2}(\lambda_{2}^{2}+\lambda_{3}^{2}+2f)+(1+2\epsilon)(1-\epsilon)(f-\lambda_{2}\lambda_{3})\\
+(1-\epsilon-\epsilon^{2})(\lambda_{2}+\lambda_{3})^{2}+\lambda_{2}\lambda_{3}\\
\geq(1-\epsilon)^{2}(\lambda_{2}^{2}+\lambda_{3}^{2})-\epsilon(1-2\epsilon)\lambda_{2}\lambda_{3}\\
\geq2(1-\epsilon)^{2}|\lambda_{2}\lambda_{3}|-\epsilon(1-2\epsilon)\lambda_{2}\lambda_{3}.
\end{eqnarray}

If we assume $2(1-\epsilon)^{2}-\epsilon(1-2\epsilon)\geq0$, we have
above inequality nonnegative.\end{proof}
\begin{claim}
\label{claim2} For fix $\delta\leq\frac{1}{20}$ , we have 
\begin{eqnarray}
\frac{2\sigma_{2}^{11}+2\sigma_{2}^{22}}{\sigma_{1}\sigma_{2}^{11}}u_{221}^{2}+\frac{2\sigma_{2}^{11}+2\sigma_{2}^{33}}{\sigma_{1}\sigma_{2}^{11}}u_{331}^{2}+\frac{4\lambda_{1}}{\sigma_{1}\sigma_{2}^{11}}u_{221}u_{331}\\
\geq\frac{(1+\delta)[(\lambda_{2}-\lambda_{1})u_{221}+(\lambda_{3}-\lambda_{1})u_{331}]^{2}}{\sigma_{1}^{2}\sigma_{2}^{11}}.
\end{eqnarray}
\end{claim}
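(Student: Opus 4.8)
My plan is to clear denominators, read the inequality as positive semidefiniteness of a binary quadratic form in $(u_{221},u_{331})$, and reduce the only nontrivial condition — the discriminant inequality — to a scalar estimate via two algebraic identities, which can then be closed using $\sigma_{2}>0$. Here $\{u_{ij}\}$ is diagonal with $\lambda_{i}=u_{ii}$, as arranged in the proof of Lemma \ref{lem3}.

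First I would multiply the asserted inequality by $\sigma_{1}^{2}\sigma_{2}^{11}>0$ (recall $\sigma_{1},\sigma_{2}^{11}>0$ on $\Gamma_{2}$). Writing $t:=\lambda_{1}-\lambda_{2}$ and $s:=\lambda_{1}-\lambda_{3}$ and using $\sigma_{2}^{11}+\sigma_{2}^{22}=\sigma_{1}+\lambda_{3}$, $\sigma_{2}^{11}+\sigma_{2}^{33}=\sigma_{1}+\lambda_{2}$, the claim becomes the statement that
\begin{equation}
\Phi(x,y):=\alpha x^{2}+2\beta xy+\gamma y^{2}\geq0\quad\text{for all }x,y,
\end{equation}
where $\alpha=2\sigma_{1}(\sigma_{1}+\lambda_{3})-(1+\delta)t^{2}$, $\gamma=2\sigma_{1}(\sigma_{1}+\lambda_{2})-(1+\delta)s^{2}$, $\beta=2\sigma_{1}\lambda_{1}-(1+\delta)ts$. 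Since $2\sigma_{1}(\sigma_{1}+\lambda_{3})-t^{2}=\lambda_{1}^{2}+\lambda_{2}^{2}+4\lambda_{3}^{2}+6\sigma_{2}$ and $t^{2}\leq2(\lambda_{1}^{2}+\lambda_{2}^{2})$, one gets $\alpha\geq(\tfrac12-\delta)t^{2}+6\sigma_{2}>0$; hence $\Phi\geq0$ reduces to the discriminant inequality $\alpha\gamma\geq\beta^{2}$.

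Expanding $\alpha\gamma-\beta^{2}$, the terms in $(1+\delta)^{2}$ cancel, and one is left with
\begin{equation}
\alpha\gamma-\beta^{2}=2\sigma_{1}\Bigl\{2\sigma_{1}\bigl[(\sigma_{1}+\lambda_{2})(\sigma_{1}+\lambda_{3})-\lambda_{1}^{2}\bigr]-(1+\delta)\bigl[(\sigma_{1}+\lambda_{3})s^{2}+(\sigma_{1}+\lambda_{2})t^{2}-2\lambda_{1}ts\bigr]\Bigr\}.
\end{equation}
The crux is to notice the two identities
\begin{equation}
(\sigma_{1}+\lambda_{2})(\sigma_{1}+\lambda_{3})-\lambda_{1}^{2}=\sigma_{2}+2\sigma_{1}\sigma_{2}^{11},\qquad(\sigma_{1}+\lambda_{3})s^{2}+(\sigma_{1}+\lambda_{2})t^{2}-2\lambda_{1}ts=\sigma_{2}^{11}(2t^{2}-ts+2s^{2}),
\end{equation}
each verified by a short direct expansion. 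They turn $\alpha\gamma\geq\beta^{2}$ into
\begin{equation}
2\sigma_{1}\bigl(\sigma_{2}+2\sigma_{1}\sigma_{2}^{11}\bigr)\geq(1+\delta)\,\sigma_{2}^{11}\,(2t^{2}-ts+2s^{2}).
\end{equation}

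To finish, I would use $(t-s)^{2}\geq0$, which gives $2t^{2}-ts+2s^{2}\leq3(t^{2}-ts+s^{2})$, together with the identity $t^{2}-ts+s^{2}=\tfrac12\sum_{i<j}(\lambda_{i}-\lambda_{j})^{2}=\sigma_{1}^{2}-3\sigma_{2}$; hence $2t^{2}-ts+2s^{2}\leq3\sigma_{1}^{2}-9\sigma_{2}\leq3\sigma_{1}^{2}$ by $\sigma_{2}>0$. Therefore the right-hand side above is at most $3(1+\delta)\sigma_{1}^{2}\sigma_{2}^{11}\leq4\sigma_{1}^{2}\sigma_{2}^{11}$ whenever $\delta\leq\tfrac13$, which is dominated by $2\sigma_{1}\sigma_{2}+4\sigma_{1}^{2}\sigma_{2}^{11}=2\sigma_{1}(\sigma_{2}+2\sigma_{1}\sigma_{2}^{11})$; since $\delta\leq\tfrac1{20}$ this is fine. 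The step I expect to be the main obstacle is recognizing the two collapsing identities above: once $\alpha\gamma-\beta^{2}$ is written in the form $2\sigma_{1}\bigl(2\sigma_{1}L-(1+\delta)Q\bigr)$, it is not evident a priori that $L=(\sigma_{1}+\lambda_{2})(\sigma_{1}+\lambda_{3})-\lambda_{1}^{2}$ equals $\sigma_{2}+2\sigma_{1}\sigma_{2}^{11}$ and that the quadratic form $Q$ in $(t,s)$ factors through $\sigma_{2}^{11}$; after that the estimate is routine, and the slack (any $\delta<\tfrac13$ works) is comfortably compatible with the later choice $\epsilon=\tfrac1{100}$ in the proof of Lemma \ref{lem3}.
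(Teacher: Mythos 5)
Your proof is correct, and it takes a genuinely different (and cleaner) route than the paper. The paper's proof also reduces the claim to the discriminant inequality $\alpha\gamma\geq\beta^{2}$, but then brute-forces the expansion: it rewrites $\alpha,\gamma,\beta$ as explicit cubics in $\lambda_{1},\lambda_{2},\lambda_{3},f$, expands $\alpha\gamma-\beta^{2}$ in full, observes that the dangerous term $[(1-\delta)\lambda_{1}^{2}+(3+\delta)f]^{2}$ cancels exactly, and then controls the remaining pieces through three separate elementary estimates (the displays labelled (\ref{eq:ele1}), (\ref{eq:ele2}), (\ref{eq:ele3})). Your argument instead collapses $\alpha\gamma-\beta^{2}$ at the outset via the two structural identities
\begin{equation}
(\sigma_{1}+\lambda_{2})(\sigma_{1}+\lambda_{3})-\lambda_{1}^{2}=\sigma_{2}+2\sigma_{1}\sigma_{2}^{11},\qquad(\sigma_{1}+\lambda_{3})s^{2}+(\sigma_{1}+\lambda_{2})t^{2}-2\lambda_{1}ts=\sigma_{2}^{11}(2t^{2}-ts+2s^{2}),
\end{equation}
which I have checked; together with $t^{2}-ts+s^{2}=\sigma_{1}^{2}-3\sigma_{2}$ and $2t^{2}-ts+2s^{2}\leq3(t^{2}-ts+s^{2})$, the whole thing reduces to $3(1+\delta)\leq4$, i.e. any $\delta\leq\tfrac13$ works. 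Your approach buys transparency (one sees exactly where $\sigma_{2}>0$ and $\sigma_{2}^{11}>0$ enter, and it gives the cleaner threshold $\delta\leq\tfrac13$ rather than $\tfrac1{20}$), while the paper's approach is more mechanical but requires no advance recognition of the two collapsing identities. You also supply the check $\alpha>0$ that the quadratic form argument tacitly needs, which the paper leaves implicit; that is a small but real improvement in rigor.
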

\begin{proof}
In order to prove this claim, we need to prove 
\begin{eqnarray}
[2(\sigma_{2}^{11}+\sigma_{2}^{22})\sigma_{1}-(1+\delta)(\lambda_{1}-\lambda_{2})^{2}]\\
\times[2(\sigma_{2}^{11}+\sigma_{2}^{33})\sigma_{1}-(1+\delta)(\lambda_{1}-\lambda_{3})^{2}]\\
\geq[2\lambda_{1}\sigma_{1}-(1+\delta)(\lambda_{1}-\lambda_{2})(\lambda_{1}-\lambda_{3})]^{2}.\label{eq:elem}
\end{eqnarray}

Because we have 
\begin{eqnarray}
 & 2(\sigma_{2}^{11}+\sigma_{2}^{22})\sigma_{1}-(1+\delta)(\lambda_{1}-\lambda_{2})^{2}\\
= & 2\sigma_{1}^{2}+2\lambda_{3}\sigma_{1}-(1+\delta)(\lambda_{1}-\lambda_{2})^{2}\\
= & 2\lambda_{1}^{2}+2\lambda_{2}^{2}+2\lambda_{3}^{2}+4f+2\lambda_{3}^{2}+2\lambda_{3}\sigma_{2}^{33}-(1+\delta)(\lambda_{1}-\lambda_{2})^{2}\\
= & (1-\delta)\lambda_{1}^{2}+(1-\delta)\lambda_{2}^{2}+4\lambda_{3}^{2}+6f+2\delta\lambda_{1}\lambda_{2}.
\end{eqnarray}

And similarly 
\begin{eqnarray}
 & 2(\sigma_{2}^{11}+\sigma_{2}^{33})\sigma_{1}-(1+\delta)(\lambda_{1}-\lambda_{3})^{2}\\
= & (1-\delta)\lambda_{1}^{2}+(1-\delta)\lambda_{3}^{2}+4\lambda_{2}^{2}+6f+2\delta\lambda_{1}\lambda_{3}.
\end{eqnarray}

We also compute right hand side of (\ref{eq:elem}) 
\begin{eqnarray}
 & 2\lambda_{1}\sigma_{1}-(1+\delta)(\lambda_{1}-\lambda_{2})(\lambda_{1}-\lambda_{3})\\
= & 2\lambda_{1}^{2}+2\lambda_{1}\sigma_{2}^{11}-(1+\delta)(\lambda_{1}^{2}-\lambda_{2}\lambda_{1}-\lambda_{3}\lambda_{1}+\lambda_{2}\lambda_{3})\\
= & (1-\delta)\lambda_{1}^{2}+2f-2\lambda_{2}\lambda_{3}+(1+\delta)(f-2\lambda_{2}\lambda_{3})\\
= & (1-\delta)\lambda_{1}^{2}+(3+\delta)f-2(2+\delta)\lambda_{2}\lambda_{3}.
\end{eqnarray}

Then we have 
\begin{eqnarray}
[(1-\delta)\lambda_{1}^{2}+(1-\delta)\lambda_{2}^{2}+4\lambda_{3}^{2}+6f+2\delta\lambda_{1}\lambda_{2}]\\
\times[(1-\delta)\lambda_{1}^{2}+(1-\delta)\lambda_{3}^{2}+4\lambda_{2}^{2}+6f+2\delta\lambda_{1}\lambda_{3}]\\
-[(1-\delta)\lambda_{1}^{2}+(3+\delta)f-2(2+\delta)\lambda_{2}\lambda_{3}]^{2}\\
=[(1-\delta)\lambda_{1}^{2}+(3+\delta)f][6f+(5-\delta)(\lambda_{2}^{2}+\lambda_{3}^{2})-2\delta\lambda_{2}\lambda_{3}]\\
+[(3-\delta)f+(1-\delta)\lambda_{2}^{2}+4\lambda_{3}^{2}+2\delta\lambda_{1}\lambda_{2}]\\
\times[(3-\delta)f+(1-\delta)\lambda_{3}^{2}+4\lambda_{2}^{2}+2\delta\lambda_{1}\lambda_{3}]\\
+4[(1-\delta)\lambda_{1}^{2}+(3+\delta)f](2+\delta)\lambda_{2}\lambda_{3}-4(2+\delta)^{2}\lambda_{2}^{2}\lambda_{3}^{2}\\
=[(1-\delta)\lambda_{1}^{2}+(3+\delta)f][6f+(5-\delta)(\lambda_{2}^{2}+\lambda_{3}^{2})+2(4+\delta)\lambda_{2}\lambda_{3}]\\
+[(3-\delta)f+(1-\delta)\lambda_{2}^{2}+4\lambda_{3}^{2}][(3-\delta)f+(1-\delta)\lambda_{3}^{2}+4\lambda_{2}^{2}]\\
+2\delta\lambda_{1}\lambda_{2}[(3-\delta)f+(1-\delta)\lambda_{3}^{2}+4\lambda_{2}^{2}]\\
+2\delta\lambda_{1}\lambda_{3}[(3-\delta)f+(1-\delta)\lambda_{2}^{2}+4\lambda_{3}^{2}]\\
+4\delta^{2}\lambda_{1}^{2}\lambda_{2}\lambda_{3}-4(2+\delta)^{2}\lambda_{2}^{2}\lambda_{3}^{2}.\label{eq:ele0}
\end{eqnarray}
The point in the above computation is that term $[(1-\delta)\lambda_{1}^{2}+(3+\delta)f]^{2}$
cancels exactly. Moreover, other terms has a lot of room to play with
if you choose $\delta$ small.

We deal with these terms and assume that $\delta\leq\frac{1}{20}$
\begin{eqnarray}
[(1-\delta)\lambda_{1}^{2}+(3+\delta)f]\\
\times[6f+(5-\delta)(\lambda_{2}^{2}+\lambda_{3}^{2})+2(4+\delta)\lambda_{2}\lambda_{3}]\\
\geq(1-\delta)\lambda_{1}^{2}(1-2\delta)(\lambda_{2}^{2}+\lambda_{3}^{2})\\
\geq-4\delta^{2}\lambda_{1}^{2}\lambda_{2}\lambda_{3}.\label{eq:ele1}
\end{eqnarray}

And 
\begin{eqnarray}
2\delta\lambda_{1}\lambda_{2}[(3-\delta)f+(1-\delta)\lambda_{3}^{2}+4\lambda_{2}^{2}]\\
+2\delta\lambda_{1}\lambda_{3}[(3-\delta)f+(1-\delta)\lambda_{2}^{2}+4\lambda_{3}^{2}]\\
=2\delta(f-\lambda_{2}\lambda_{3})(3-\delta)f+2\delta(1-\delta)(f-\lambda_{2}\lambda_{3})(\lambda_{2}^{2}+\lambda_{3}^{2})\\
+2\delta(3+\delta)\lambda_{1}(\lambda_{2}^{3}+\lambda_{3}^{3})\\
\geq-2\delta(3-\delta)\lambda_{2}\lambda_{3}f-2\delta(1-\delta)\lambda_{2}\lambda_{3}(\lambda_{2}^{2}+\lambda_{3}^{2})\\
+2\delta(3+\delta)(f-\lambda_{2}\lambda_{3})(\lambda_{2}^{2}-\lambda_{2}\lambda_{3}+\lambda_{3}^{2})\\
\geq-2\delta(3-\delta)\lambda_{2}\lambda_{3}f\\
-2\delta\lambda_{2}\lambda_{3}(4\lambda_{2}^{2}+4\lambda_{3}^{2}-(3+\delta)\lambda_{2}\lambda_{3}).\label{eq:ele2}
\end{eqnarray}

We also have, 
\begin{eqnarray}
[(3-\delta)f+(1-\delta)\lambda_{2}^{2}+4\lambda_{3}^{2}][(3-\delta)f+(1-\delta)\lambda_{3}^{2}+4\lambda_{2}^{2}]\\
\geq[16+(1-\delta)^{2}]\lambda_{2}^{2}\lambda_{3}^{2}+(3-\delta)f(5-\delta)(\lambda_{2}^{2}+\lambda_{3}^{2})\\
+4(1-\delta)(\lambda_{2}^{4}+\lambda_{3}^{4})\\
\geq4(2+\delta)^{2}\lambda_{2}^{2}\lambda_{3}^{2}+16f|\lambda_{2}\lambda_{3}|+6\delta(\lambda_{2}^{2}+\lambda_{3}^{2})^{2}\\
\geq4(2+\delta)^{2}\lambda_{2}^{2}\lambda_{3}^{2}+2\delta(3-\delta)\lambda_{2}\lambda_{3}f\\
+2\delta\lambda_{2}\lambda_{3}(4\lambda_{2}^{2}+4\lambda_{3}^{2}-(3+\delta)\lambda_{2}\lambda_{3}).\label{eq:ele3}
\end{eqnarray}

We combine inequalities (\ref{eq:ele1}), (\ref{eq:ele2}) and (\ref{eq:ele3})
into (\ref{eq:ele0}) to verify inequality (\ref{eq:elem}). Finally
we complete the proof of this claim.\end{proof}
\begin{lem}
\label{lem4}Suppose $u$ are admissible solutions of the equations
(\ref{eq:sigma2}) on $B_{10}\subset\mathbb{R}^{3}$ , then we have
\begin{equation}
\sup_{B_{5}}\sigma_{1}\leq C\sup_{B_{1}}\sigma_{1}.\label{eq:est}
\end{equation}
\end{lem}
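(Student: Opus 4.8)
The plan is to establish \eqref{eq:est} as a Harnack-type inequality for the quantity $b = \log \Delta u = \log \sigma_1$, exploiting the key differential inequality \eqref{eq:logb} from Lemma \ref{lem3}. The point is that \eqref{eq:logb} says $\sigma_2^{ij} b_{ij} \geq \tfrac{1}{100}\sigma_2^{ij} b_i b_j - C\Delta u + \sum_i f_{u_i} b_i$, and since $\Delta u = e^b$, the term $-C\Delta u = -Ce^b$ acts as a favorable (concave) zeroth-order term once $b$ is large, while the gradient-squared term on the right is what drives the Harnack estimate. First I would fix the linearized operator $L = \sigma_2^{ij}\partial_{ij}$, which is elliptic (though not uniformly, and with coefficients depending on $D^2u$, i.e. on $b$ itself) by the $\Gamma_2$-admissibility from \cite{caffarelli1985dirichlet}. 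I would then run a test-function argument: multiply the inequality by $\eta^2$ for a cutoff $\eta \in C_c^\infty(B_{10})$ with $\eta \equiv 1$ on $B_5$, integrate against a suitable measure, and use the $\tfrac{1}{100}\sigma_2^{ij}b_ib_j$ term to absorb the cross terms coming from $\nabla\eta$. The gradient term $\sum_i f_{u_i}b_i$ is lower-order and controlled since $|f_{u_i}|$ is bounded in terms of $\|f\|_{C^2}$; it can be absorbed into the good term plus a constant.

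The cleaner route, and the one I would actually pursue, is to run the Warren–Yuan / mean-value-inequality machinery referenced in the introduction: view the gradient graph $(x, Du)$ as a submanifold of $(\mathbb{R}^3\times\mathbb{R}^3,\, f\,dx^2 + dy^2)$ with bounded mean curvature, and apply the Riemannian mean value inequality (Hoffman–Spruck \cite{hoffman1974sobolev}) to the subsolution property of $b$ on this submanifold. Concretely, \eqref{eq:logb} should translate, after accounting for the induced metric and the bounded mean curvature term, into the statement that $b$ (or $b$ plus a controlled correction like $b + A|x|^2$ or $b + A u$) is a subsolution of the Laplace–Beltrami operator on the submanifold up to a bounded additive constant. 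Then the mean value inequality gives a pointwise bound for $\sup b$ on a small ball by the integral of (the positive part of) $b$ over a larger ball with respect to the induced volume measure, and one converts this volume integral back to a Euclidean integral and then to $\sup_{B_1}\sigma_1$ using that the volume form is comparable to $dx$ up to factors of $f$ and $(1+\Delta u)$ — here one needs the volume-growth / area bound for the graph, which follows from the divergence structure of $\sigma_2$ and integration by parts (a "$\int \Delta u \leq C$" type estimate on balls, obtained by testing \eqref{eq:f1} against a cutoff, using $\sigma_2^{ij}u_{ij} = 2\sigma_2$ and $\|u\|_{C^1}$).

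The main obstacle I anticipate is the degeneracy and non-uniform ellipticity of $\sigma_2^{ij}$: when $\lambda_1 \to \infty$ with $\lambda_2,\lambda_3$ controlled, the smallest eigenvalue $\sigma_2^{11}$ of the linearization can degenerate, so one cannot treat $L$ as a fixed uniformly elliptic operator and apply De Giorgi–Nash–Moser off the shelf. The resolution, as in \cite{warren2009hessian}, is precisely that the mean value inequality on the gradient graph is insensitive to this degeneracy — the induced metric absorbs it — so the geometric formulation sidesteps the PDE degeneracy. A secondary technical point is controlling the error terms generated by $f$ not being constant (the $\sum_i f_{u_i} b_i$ term and the $\Delta f$ contributions already folded into \eqref{eq:logb} and into Lemma \ref{lem1}): these are all $O(1+\Delta u) = O(e^b)$ or lower, hence dominated by the $-C\Delta u$ term or absorbable, but keeping the dependence only on $\|f\|_{C^2}$, $\|1/f\|_{L^\infty}$, $\|u\|_{C^1}$ requires care. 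The final step — passing from $B_5$ to $B_{1/2}$ in the main theorem — will then follow by a standard covering/iteration argument combined with an $L^1$ bound on $\sigma_1$, but that is for later; here we only need the supremum-to-supremum comparison \eqref{eq:est}.
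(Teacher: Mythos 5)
Your proposal does not match the paper's proof, and more importantly neither of your two suggested routes would actually produce the statement of Lemma~\ref{lem4}. What the paper does is a \emph{pointwise} maximum principle argument with a carefully designed test function. Setting $\rho(x)=10^{2}-|x|^{2}$ and $M_{1}:=\sup_{B_{1}}\sigma_{1}$, one considers
\begin{equation}
P(x)=2\log\rho(x)+g(x\cdot Du-u)+\log\log\max\Bigl\{\tfrac{\sigma_{1}}{M_{1}},20\Bigr\},
\end{equation}
with $g(t)=-\tfrac{1}{2\beta}\log\bigl(1-\tfrac{t}{20\max|Du|+2\max|u|+1}\bigr)$. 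If the max of $P$ occurs where $\sigma_{1}/M_{1}\le 20$ we are done; otherwise the max is in the annulus $B_{10}\setminus B_{1}$, and at that point one contracts $P_{ij}$ with $\sigma_{2}^{ij}$, uses Lemma~\ref{lem3} on $b=\log\sigma_{1}$, and splits into two cases depending on whether the largest coordinate $x_{j}^{2}\ge\tfrac16$ occurs in the eigendirection of $\lambda_{1}$ or in one of the others; the $g''\sigma_{2}^{ii}(x_{i}u_{ii})^{2}$ term and Lemma~\ref{lemma2} supply the good positive terms. This is the ``new maximum principle method'' from \cite{GuanQiu} that the introduction alludes to; nothing in your sketch reproduces the crucial terms $g(x\cdot Du-u)$ and $\log\log(\sigma_{1}/M_{1})$, which are the entire content of the proof.

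The more serious issue is your proposed ``cleaner route'' via the Riemannian mean value inequality on the gradient graph. That is exactly the tool of Theorem~\ref{thm-meanvalue}, and it runs in the wrong direction for Lemma~\ref{lem4}: a mean value inequality bounds a \emph{pointwise} value by an \emph{integral} over a larger ball, $\sigma_{1}(y_{0})\le C\int_{B_{1}(y_{0})}\sigma_{1}(\sigma_{1}f-\sigma_{3})\,dx$, whereas Lemma~\ref{lem4} is a sup-on-bigger-ball bounded by sup-on-smaller-ball inequality (a reverse doubling), which a mean value inequality simply cannot produce on its own. Worse, in the logical structure of Section~4, Lemma~\ref{lem4} is precisely what is needed to replace the $M_{5}$ produced by Lemma~\ref{lem5} with $M_{1}$ so that the mean value estimate closes; using the mean value inequality to prove Lemma~\ref{lem4} would therefore be circular. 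Your first sketch (Moser iteration with a cutoff $\eta^{2}$) has the degeneracy obstruction you yourself flag, and you do not offer any mechanism to defeat it --- in particular it would at best give a sup over a \emph{smaller} ball bounded by an integral over a bigger one, again not what \eqref{eq:est} asserts. The key missing idea is the $\log\log$ test function with the $g(x\cdot Du-u)$ correction and the coordinate-wise case analysis.
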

\begin{proof}
Denote $\rho(x)=10^{2}-|x|^{2}$, and $M_{r}:=\sup_{B_{1}}\sigma_{1}$.
We consider a test function in $B_{10}$ 
\begin{equation}
P(x)=2\log\rho(x)+g(x\cdot Du-u)+\log\log\max\{\frac{\sigma_{1}}{M_{1}},20\},\label{eq:test}
\end{equation}
where $g(t)=-\frac{1}{2\beta}(\log(1-\frac{t}{20\max|Du|+2\max|u|+1}))$
and $\beta$ is a larger number to be determined later. 

Assume $P$ attains its maximum point, say $x_{0}\in B_{10}$, and
$\frac{\sigma_{1}}{\sup_{B_{1}}\sigma_{1}}>20$. Then $P$ must attained
its maximum point in the ring $B_{10}/B_{1}$. Moreover, we may always
assume $\frac{\sigma_{1}}{\sup_{B_{1}}\sigma_{1}}$ sufficiant large.
We choose coordinate frame $\{e_{1},e_{2},e_{3}\}$, such that $D^{2}u$
is diagonalized at this point. By maximum principle, at $x_{0}$,
we have
\begin{equation}
0=P_{i}=\frac{2\rho_{i}}{\rho}+g^{\prime}x_{k}u_{ki}+\frac{b_{i}}{b-\log M_{1}}.\label{eq:fp}
\end{equation}
and
\begin{eqnarray}
P_{ij} & = & \frac{2\rho_{ij}}{\rho}-\frac{2\rho_{i}\rho_{j}}{\rho^{2}}\\
 &  & +g^{\prime\prime}x_{k}u_{ki}x_{l}u_{lj}+g^{\prime}(u_{ij}+x_{k}u_{kij})\\
 &  & +\frac{b_{ij}}{b-\log M_{1}}-\frac{b_{i}b_{j}}{(b-\log M_{1})^{2}}.
\end{eqnarray}
Contracting with $\sigma_{2}^{ij}:=\frac{\partial\sigma_{2}(D^{2}u)}{\partial u_{ij}}$,
we get 
\begin{eqnarray}
\sigma_{2}^{ij}P_{ij} & = & -4\frac{\sum\sigma_{2}^{ii}}{\rho}-8\frac{\sigma_{2}^{ii}x_{i}^{2}}{\rho^{2}}\nonumber \\
 &  & +g^{\prime\prime}\sigma_{2}^{ii}(x_{i}u_{ii})^{2}+2g^{\prime}f+g^{\prime}x_{k}f_{k}\nonumber \\
 &  & +\frac{\sigma_{2}^{ii}b_{ii}}{b-\log M_{1}}-\frac{\sigma_{2}^{ii}b_{i}^{2}}{(b-\log M_{1})^{2}}.\label{eq:P1}
\end{eqnarray}
Using Lemma \ref{lem1}, we get 
\begin{eqnarray}
\sigma_{2}^{ij}P_{ij} & \geq & -4\frac{\sum\sigma_{2}^{ii}}{\rho}-8\frac{\sigma_{2}^{ii}x_{i}^{2}}{\rho^{2}}\\
 &  & +g^{\prime\prime}\sigma_{2}^{ii}(x_{i}u_{ii})^{2}+2g^{\prime}f+g^{\prime}x_{k}f_{k}\\
 &  & +\frac{\sigma_{2}^{ii}b_{i}^{2}}{100(b-\log M_{1})}+\frac{\sum f_{u_{i}}b_{i}-C\sigma_{1}}{b-\log M_{1}}.
\end{eqnarray}
By (\ref{eq:fp}), 
\begin{equation}
g^{\prime}x_{k}f_{k}+\frac{\sum_{i}f_{u_{i}}b_{i}}{b-\log M_{1}}\geq-C(g^{\prime}+\frac{1}{\rho}).
\end{equation}
We may assume $\rho^{2}(x_{0})\log\frac{\sigma_{1}(x_{0})}{M_{1}}\geq C$
. We also use Newton -MacLaur in inequality $\sigma_{1}\sigma_{2}\geq9\sigma_{3}$
to get 
\begin{eqnarray}
\sigma_{2}^{ij}P_{ij} & \geq & -4\frac{\sum\sigma_{2}^{ii}}{\rho}-8\frac{\sigma_{2}^{ii}x_{i}^{2}}{\rho^{2}}+g^{\prime\prime}\sigma_{2}^{ii}(x_{i}u_{ii})^{2}\\
 &  & +\frac{\sigma_{2}^{ii}b_{i}^{2}}{100(b-\log M_{1})}-\frac{C\sigma_{1}}{b-\log M_{1}}.\label{eq:P2-1}
\end{eqnarray}
Then we divided into two cases.

Case 1: $x_{1}^{2}\geq\frac{1}{6}$.

From (\ref{eq:fp}), we know 
\begin{eqnarray*}
0 & = & -\frac{4x_{1}}{\rho}+g^{\prime}x_{1}u_{11}+\frac{b_{1}}{b-\log M_{1}}.
\end{eqnarray*}
We can assume 
\begin{eqnarray*}
g^{\prime}|x_{1}|u_{11} & \geq & \frac{12|x_{1}|}{\rho}.
\end{eqnarray*}
Otherwise we get the estimate from
\begin{equation}
g^{\prime}(b-\log M_{1})\leq g^{\prime}u_{11}\leq\frac{12}{\rho}.
\end{equation}
Then we have 
\begin{equation}
(\frac{b_{1}}{b-\log M_{1}})^{2}\geq\frac{(g^{\prime})^{2}x_{1}^{2}u_{11}^{2}}{3}\geq\frac{(g^{\prime})^{2}u_{11}^{2}}{18}.
\end{equation}
Inserting this inequality into (\ref{eq:P2-1}), we get from Lemma
\ref{lemma2}
\begin{eqnarray}
\sigma_{2}^{ij}P_{ij} & \geq & -4\frac{\sum\sigma_{2}^{ii}}{\rho}-8\frac{\sigma_{2}^{ii}x_{i}^{2}}{\rho^{2}}+\frac{\sigma_{2}^{ii}b_{i}^{2}}{100(b-\log M_{1})}-\frac{C\sigma_{1}}{b-\log M_{1}}.\\
 & \geq & -4\frac{\sum\sigma_{2}^{ii}}{\rho}-800\frac{\sum\sigma_{2}^{ii}}{\rho^{2}}+\frac{(g^{\prime})^{2}\sigma_{2}^{11}u_{11}^{2}(b-\log M_{1})}{180}\\
 & \geq & -4\frac{\sum\sigma_{2}^{ii}}{\rho}-800\frac{\sum\sigma_{2}^{ii}}{\rho^{2}}+\frac{c_{1}(g^{\prime})^{2}(b-\log M_{1})\sum\sigma_{2}^{ii}}{180}.
\end{eqnarray}
So we obtain the estimate
\begin{equation}
\rho^{2}(x)\log\frac{\sigma_{1}(x)}{M_{1}}\leq C\rho^{2}(x_{0})\log\frac{\sigma_{1}(x_{0})}{M_{1}}\leq C.
\end{equation}
Case 2: there exists $j\geq2$, such that $x_{j}^{2}\geq\frac{1}{6}$.

Using (\ref{eq:fp}), we have 
\begin{eqnarray*}
\beta\frac{\sigma_{2}^{jj}b_{j}^{2}}{(b-\log M_{1})^{2}} & = & \beta\sigma_{2}^{jj}[-\frac{4x_{j}}{\rho}+g^{\prime}x_{j}u_{jj}]^{2}\\
 & \geq & 2\beta\frac{\sigma_{2}^{jj}x_{j}^{2}}{\rho^{2}}-2\beta\sigma_{2}^{jj}(g^{\prime})^{2}(x_{j}u_{jj})^{2}.
\end{eqnarray*}
Then we get 
\begin{eqnarray*}
\sigma_{2}^{ij}P_{ij} & \geq & -4\frac{\sum\sigma_{2}^{ii}}{\rho}+(2\beta-8)\frac{\sigma_{2}^{jj}x_{j}^{2}}{\rho^{2}}-\frac{C\sigma_{1}}{b-\log M_{1}}\\
 &  & +(g^{\prime\prime}-2\beta(g^{\prime})^{2})\sigma_{2}^{ii}(x_{i}u_{ii})^{2}.
\end{eqnarray*}
If we choose $g^{\prime\prime}-2\beta(g^{\prime})^{2}>0$, by Lemma
\ref{lemma2} we have 
\begin{eqnarray*}
\sigma_{2}^{ij}P_{ij} & \geq & -4\frac{\sum\sigma_{2}^{ii}}{\rho}+(2\beta-8)\frac{\sigma_{2}^{jj}}{6\rho^{2}}\\
 & \geq & \frac{\sum\sigma_{2}^{ii}}{\rho}(-4+\frac{(\beta-2)c_{2}}{300}).
\end{eqnarray*}
We choose constant above by the following order, first $\beta=\frac{1200}{c_{2}}+3$,
then $g(t)=-\frac{1}{2\beta}(\log(1-\frac{t}{20\max|Du|+2\max|u|+1}))$
such that $g^{\prime\prime}\geq2\beta(g^{\prime})^{2}$. We finally
get the estimate (\ref{eq:est}).
\end{proof}
Similarly, if we consider quantity $V:=\log(f\sigma_{1}-\sigma_{3})$
we also have the following Lemma.
\begin{lem}
\label{lem5}Suppose $u$ are admissible solutions of equations (\ref{eq:sigma2})
on $B_{10}\subset\mathbb{R}^{3}$ , then we have in $B_{4}$ that 

\begin{equation}
\sigma_{2}^{ij}V{}_{ij}\geq-C\sigma_{1}+f_{u_{i}}V{}_{i},\label{eq:subV}
\end{equation}
and
\begin{equation}
\sigma_{1}f-\sigma_{3}\leq\frac{M_{5}}{\log^{\frac{1}{2}}M_{5}}\log(\sigma_{1}f-\sigma_{3}).\label{eq:V-1}
\end{equation}
\end{lem}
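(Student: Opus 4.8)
\emph{Setup and identities.} Write $\phi:=\sigma_{1}f-\sigma_{3}=\sigma_{1}\sigma_{2}-\sigma_{3}$, so $V=\log\phi$. If $\lambda_{1},\lambda_{2},\lambda_{3}$ are the eigenvalues of $D^{2}u$, then
\[
\phi=\prod_{i<j}(\lambda_{i}+\lambda_{j})=\det\big(\sigma_{2}^{ij}\big),
\]
which is positive on $\Gamma_{2}$; moreover $\phi=\sqrt{\det g}$ for the positive definite matrix $g_{ij}:=f\delta_{ij}+u_{ik}u_{kj}$, the metric induced on the graph $\Sigma=\{(x,Du(x))\}$ inside $\mathbb{R}^{3}\times\mathbb{R}^{3}$ with $f\,dx^{2}+dy^{2}$. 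Diagonalizing $D^{2}u$ one checks the pointwise identity $\sqrt{\det g}\,g^{ij}=\sigma_{2}^{ij}$, and since $\sigma_{2}^{ij}=\sigma_{1}\delta_{ij}-u_{ij}$ the coefficients are divergence free: $\partial_{i}\sigma_{2}^{ij}=\partial_{j}\sigma_{1}-\partial_{i}u_{ij}=0$. Hence for every $w$,
\[
\sigma_{2}^{ij}w_{ij}=\partial_{i}\big(\sigma_{2}^{ij}w_{j}\big)=\sqrt{\det g}\,\Delta_{g}w,\qquad\text{in particular}\qquad\sigma_{2}^{ij}V_{ij}=\phi\,\Delta_{g}\big(\tfrac12\log\det g\big).
\]
Under (\ref{eq:sigma2}) the submanifold $\Sigma$ has mean curvature bounded in terms of $\|f\|_{C^{2}},\|1/f\|_{L^{\infty}}$, which is the geometric reason behind both assertions; the drift term $f_{u_{i}}V_{i}$ is forced by the dependence of $f$ on $Du$.

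\emph{Proof of (\ref{eq:subV}).} I would argue directly, in the style of Lemma \ref{lem3}: diagonalize $D^{2}u$, differentiate $\phi$ twice, contract with $\sigma_{2}^{ij}$, eliminate all fourth-order derivatives using (\ref{eq:f1})--(\ref{eq:f2}), peel off $\sum_{p}f_{u_{p}}u_{pij}$ and bound the remaining $f$-contributions by $C\sigma_{1}\phi$ via (\ref{eq:gradient})--(\ref{eq:Hessianf}). What is left is
\[
\sigma_{2}^{ij}V_{ij}=\frac{1}{\phi}\,\mathcal{Q}(u_{ijk})+\big(\text{terms}\ \ge\ -C\sigma_{1}\big)+\sum_{i}f_{u_{i}}V_{i},
\]
with $\mathcal{Q}$ a quadratic form in the third derivatives coming from $\sigma_{2}^{ij}\phi_{ij}$ after subtracting the gradient contribution $\sigma_{2}^{ij}\phi_{i}\phi_{j}/\phi$. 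The core of the argument is the positivity $\mathcal{Q}\ge0$ (equivalently $\ge-C\sigma_{1}\phi$), an elementary but lengthy computation of the same flavour as Claims \ref{claim1}--\ref{claim2}, again exploiting Lemma \ref{lemma2} to compare $\sigma_{2}^{jj}$ with $\sigma_{1}$ and $\sigma_{2}^{11}\lambda_{1}$ with $f$; this is the main obstacle in this part. Alternatively, the identity $\sigma_{2}^{ij}V_{ij}=\phi\,\Delta_{g}(\tfrac12\log\det g)$ reduces (\ref{eq:subV}) to $\Delta_{g}V\ge-C$, the standard Simons/Bochner-type lower bound for the Laplace--Beltrami of the log volume density of a graph of bounded mean curvature (the $|A|^{2}$ term has the good sign, the rest is ambient curvature, hence controlled); since $\phi\ge c_{0}\sigma_{1}$ this upgrades to $\ge-C\sigma_{1}$, the drift again accounting for the $Du$-dependence of $f$.

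\emph{Proof of (\ref{eq:V-1}).} The plan is to feed (\ref{eq:subV}) into a mean value inequality on $(\Sigma,g)$, legitimate precisely because $\Sigma$ has bounded mean curvature --- this is the Riemannian mean value inequality implicit in \cite{hoffman1974sobolev}, in place of the Michael--Simon inequality \cite{michael1973sobolev} used in \cite{warren2009hessian}. Two inputs are needed. First, the volume bound: $\int_{B_{r}}\phi\,dx=\int_{B_{r}}\sqrt{\det g}\,dx$ is the $g$-volume of the part of $\Sigma$ lying over $B_{r}$, which sits in an ambient ball of radius $\lesssim r+\|Du\|_{L^{\infty}}$, so by monotonicity $\int_{B_{r}}\phi\,dx\le C$. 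Second, by the reduction above $V$ is a subsolution, $\Delta_{g}V\ge-C$ up to a bounded drift, so the mean value inequality applies to $V$; combining the $L^{1}$ bound with the pointwise bound $\phi\le C\sigma_{1}^{3}\le CM_{5}^{3}$ on $B_{5}$ --- an $L\log L$-type estimate $\int_{B_{r}}\phi\log\phi\,dx\le C(\log M_{5})\int_{B_{r}}\phi\,dx$ --- should yield (\ref{eq:V-1}) in $B_{4}$. The hard part, and the place where the Sobolev inequality of \cite{hoffman1974sobolev} is not enough and one must graft on the maximum-principle method of \cite{GuanQiu}, is twofold: controlling the extra $|H|$-term carried by the Riemannian Sobolev/mean-value inequality, and extracting the sharp exponent $M_{5}/\log^{1/2}M_{5}$ rather than $M_{5}^{C}$. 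I expect this to be done by re-running a Lemma \ref{lem4}-type argument, with test function $2\log(5^{2}-|x|^{2})+g(x\cdot Du-u)+\log\log\max\{\phi/N,20\}$ and a suitable choice of $N$, to absorb the error term while keeping the constant essentially optimal.
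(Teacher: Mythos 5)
Your geometric reformulation --- that $\sigma_{2}^{ij}$ is divergence free and equals $\sqrt{\det g}\,g^{ij}$ for the induced metric $g_{ij}=f\delta_{ij}+u_{ik}u_{kj}$ (using $f+\lambda_{i}^{2}=(\lambda_{i}+\lambda_{j})(\lambda_{i}+\lambda_{k})$), so that $\sigma_{2}^{ij}V_{ij}=\phi\,\Delta_{g}V$ with $\phi:=\sigma_{1}f-\sigma_{3}$ --- is correct and attractive, but it does not by itself produce (\ref{eq:subV}). The asserted lower bound $\Delta_{g}V\ge-C$ \emph{is} the content of the appendix, where it is established by a page of third-derivative algebra: splitting $\sigma_{2}^{ij}V_{ij}$ into four blocks $I,\dots,IV$, eliminating $u_{kkk}$ via the differentiated equation, using the identity $\sigma_{1}f-\sigma_{3}=\sigma_{2}^{11}\sigma_{2}^{22}\sigma_{2}^{33}$, and completing squares. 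There is no off-the-shelf Simons/Bochner shortcut here: the ambient metric is not flat, the graph is not minimal, and the Jacobi-type formula one would like to invoke carries exactly the cross terms that the appendix has to beat down by hand. So for (\ref{eq:subV}) you have described the shape of the argument without closing it, and the proposed ``alternative'' silently assumes the conclusion.

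For (\ref{eq:V-1}) there is a more concrete gap. The mean-value and monotonicity discussion you give belongs to Theorem~\ref{thm-meanvalue}, not here: the paper proves (\ref{eq:V-1}) by a pure maximum-principle argument in the spirit of Lemma~\ref{lem4}, with no integration at all, and you do eventually recognize this. But the test function you propose, $\log\log\max\{\phi/N,20\}$ with a \emph{constant} $N$, cannot yield the right-hand side $\frac{M_{5}}{\log^{1/2}M_{5}}\log\phi$; at best it gives $\phi\le CN$ uniformly, with no $\log\phi$ factor. That factor is essential in Section 4, where it is converted to $\log\sigma_{1}$ and absorbed by the $L\log L$ estimate $\int\sigma_{1}\log\sigma_{1}\le C$; without it the final chain degenerates to the vacuous $1\le C\log^{1/2}M_{1}$. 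The paper's test function is
\begin{equation*}
\varphi=4\log\rho+h\Bigl(\tfrac{|Du|^{2}}{2}\Bigr)+\log\log\max\Bigl\{\frac{\phi\,\log^{1/2}M_{5}}{M_{5}\log\phi},\,20\Bigr\},
\end{equation*}
and it is the \emph{nonlinear} ratio $\phi/\log\phi$ inside the double log, normalized by $M_{5}/\log^{1/2}M_{5}$, that both survives the second-order computation (using $\sigma_{1}\le CM_{5}$ from Lemma~\ref{lem4}) and yields, at the interior maximum, $\rho^{4}\log\frac{\phi\log^{1/2}M_{5}}{M_{5}\log\phi}\le C$, hence (\ref{eq:V-1}) in $B_{4}$. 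Replacing $h(|Du|^{2}/2)$ by $g(x\cdot Du-u)$ is likely harmless; dropping the $\log\phi$ in the denominator is not.
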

\begin{proof}
For the proof of the first inequality (\ref{eq:subV}) which is tedious
but similar to Lemma \ref{lem3}, we give its details in the appendix.
The second inequality (\ref{eq:V-1}) follows from (\ref{eq:subV})
almost the same as the proof of Lemma \ref{lem4}. The proof of the
second inequality will also be in the appendix. 
\end{proof}

\section{Mean value inequality.}

In this section we prove a mean value type inequality
\begin{thm}
\label{thm-meanvalue}Suppose $u$ are admissible solutions of equations
(\ref{eq:sigma2}) on $B_{10}\subset\mathbb{R}^{3}$ , then we have
\end{thm}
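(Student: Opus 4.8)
The plan is to reinterpret $u$ as a graphical submanifold of bounded mean curvature over a non-flat background metric, and to run a Hoffman--Spruck style mean value iteration on it. First I would set up the geometry: let $\Phi(x)=(x,Du(x))$ map $B_{10}$ into $\mathbb{R}^{3}\times\mathbb{R}^{3}$ carrying the metric $\tilde g=f\,dx^{2}+dy^{2}$, and let $M=\Phi(B_{10})$ with induced metric $g_{ij}=f\delta_{ij}+\sum_{k}u_{ki}u_{kj}$. Two algebraic identities, both relying on $\sigma_{2}(D^{2}u)=f$, drive everything. Expanding $\det(fI+(D^{2}u)^{2})$ and using $\sigma_{2}=f$ gives the volume density
\begin{equation}
\sqrt{\det g}=f\sigma_{1}-\sigma_{3}=e^{V},
\end{equation}
with $V$ as in Lemma \ref{lem5}; and from the characteristic polynomial of $D^{2}u$ one gets $\sqrt{\det g}\,g^{ij}=\sigma_{2}^{ij}$. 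Since the first Newton tensor is divergence free, $\partial_{i}\sigma_{2}^{ij}=0$, this yields the clean identity
\begin{equation}
\Delta_{M}\phi=\frac{1}{f\sigma_{1}-\sigma_{3}}\,\sigma_{2}^{ij}\phi_{ij}.
\end{equation}
I would also record that $M$ has bounded mean curvature, $|\vec H|_{\tilde g}\le C$: the tangential part of the relevant connection terms is controlled by $|Df|$ via (\ref{eq:f1}), while the non-flat part of $\tilde g$ produces Christoffel symbols bounded in terms of $\|f\|_{C^{1}}$ and $\|1/f\|_{L^{\infty}}$; the seemingly bad factor $|Df|\le C(1+\Delta u)$ is harmless once weighted against $f\sigma_{1}-\sigma_{3}\ge\tfrac{8}{9}f\sigma_{1}$, which follows from $\sigma_{1}\sigma_{2}\ge 9\sigma_{3}$.

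Next I would translate the Jacobi inequalities of Lemma \ref{lem3} and Lemma \ref{lem5} into the submanifold language. Dividing by $f\sigma_{1}-\sigma_{3}$ and using $f\sigma_{1}-\sigma_{3}\ge\tfrac{8}{9}f\sigma_{1}$ together with $\|1/f\|_{L^{\infty}}\le C$ (so that $\tfrac{C\sigma_{1}}{f\sigma_{1}-\sigma_{3}}\le C$), the relevant quantity $w$ --- a suitable shift of $b=\log\Delta u$, respectively of $V$ --- becomes a subsolution on $M$,
\begin{equation}
\Delta_{M}w\ge -C\qquad\text{on }M\cap\tilde B_{r_{0}}(p_{0}),\qquad p_{0}=(0,Du(0)),
\end{equation}
where the first order terms $\sum_{i}f_{u_{i}}b_{i}$ are absorbed by the good term $\tfrac{1}{100}\sigma_{2}^{ij}b_{i}b_{j}$ through Cauchy--Schwarz and the refined bounds of Lemma \ref{lemma2} (which handle the possible smallness of $\sigma_{2}^{11}$).

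Finally comes the mean value inequality itself. Since $|\vec H|\le C$ and the ambient manifold $(\mathbb{R}^{6},\tilde g)$ has sectional curvature and injectivity radius under control on the ball in play (because $f\in C^{2}$ and $\|1/f\|_{L^{\infty}}\le C$), the isoperimetric inequality of Hoffman--Spruck \cite{hoffman1974sobolev} is available on pieces of $M$. I would then test the subsolution inequality $\Delta_{M}w\ge -C$ against powers of a Lipschitz cutoff of the ambient distance to $p_{0}$, integrate by parts on $M$ --- the extra terms generated by the non-vanishing mean curvature being exactly those controlled by the isoperimetric inequality --- to obtain a reverse H\"older inequality for $w$, and run the De Giorgi--Nash--Moser iteration as in \cite{michael1973sobolev} to conclude
\begin{equation}
w(p_{0})\le C\Big(\int_{M\cap\tilde B_{r_{0}}(p_{0})}w\,dV_{M}+C\Big).
\end{equation}

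The main obstacle is this last step. Unlike the special Lagrangian situation of \cite{warren2009hessian}, $M$ is not minimal, so one cannot invoke Michael--Simon directly; one must verify that the piece $M\cap\tilde B_{r_{0}}(p_{0})$ genuinely meets the hypotheses of Hoffman--Spruck (a smallness requirement on $\|\vec H\|$ at the working scale, together with the ambient geometry bounds) and then carry the mean-curvature error term through the iteration while keeping every constant dependent only on $\|f\|_{C^{2}}$, $\|1/f\|_{L^{\infty}}$ and $\|u\|_{C^{1}}$. A smaller technical point is the gradient-term absorption in the previous step when $\sigma_{2}^{11}$ degenerates.
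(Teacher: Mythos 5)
Your geometric setup is the right one and matches the paper's (implicit) framework: with $g_{ij}=f\delta_{ij}+u_{ki}u_{kj}$ one indeed has $\sqrt{\det g}=f\sigma_{1}-\sigma_{3}$, $\sqrt{\det g}\,g^{ij}=\sigma_{2}^{ij}$, and hence $\Delta_{M}\phi=(f\sigma_{1}-\sigma_{3})^{-1}\sigma_{2}^{ij}\phi_{ij}$; and $f\sigma_{1}-\sigma_{3}\ge\tfrac{8}{9}f\sigma_{1}$ by Newton--MacLaurin. But the proposal then goes wrong in two essential ways.

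First, you are applying the mean value inequality to the wrong function. The theorem's conclusion $\sigma_{1}(y_{0})\le C\int_{B_{1}(y_{0})}\sigma_{1}(f\sigma_{1}-\sigma_{3})\,dx=C\int_{M}\sigma_{1}\,dV_{M}$ is an $L^{1}$ mean value inequality for the subsolution $\sigma_{1}$ itself, not for a shift of $b=\log\Delta u$ or of $V$. The differential inequality that drives the theorem is not from Lemma \ref{lem3} or Lemma \ref{lem5}; it is the one coming from concavity of $\sigma_{2}^{1/2}$,
\begin{equation}
\sigma_{2}^{ij}(\sigma_{1})_{ij}\ \ge\ \Delta f-\frac{|Df|^{2}}{2f}\ \ge\ -C(\Delta u)^{2}+f_{u_{i}}(\Delta u)_{i},
\end{equation}
which, divided by the volume density, says that $\sigma_{1}$ is an approximate subsolution on $M$. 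Your proposed conclusion $w(p_{0})\le C(\int_{M}w+C)$ with $w\sim\log\sigma_{1}$ is a different (and much weaker) statement than the theorem; you cannot recover $\sigma_{1}(y_{0})\le C\int_{M}\sigma_{1}$ from it. The Jacobi inequalities of Lemmas \ref{lem3} and \ref{lem5} enter only downstream, in the interior integral estimates of Section 4 and in Lemma \ref{lem5}.

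Second, the route through Hoffman--Spruck's isoperimetric inequality and De Giorgi--Nash--Moser iteration is precisely the one the paper deliberately avoids, and for a reason: the Sobolev inequality on a non-minimal graph carries an extra mean-curvature term that the Warren--Yuan-style integral estimates do not obviously control. The paper instead reproduces Michael--Simon's monotonicity formula directly, without any Sobolev inequality. Concretely, it sets $\psi(r)=\int_{r}^{\infty}t\chi(\rho-t)\,dt$ with the anisotropic ambient distance $r^{2}=f|x-y_{0}|^{2}+|Du(x)-Du(y_{0})|^{2}$, computes $\sigma_{2}^{ij}\psi_{ij}$ term by term, and proves the key algebraic bound
\begin{equation}
\sigma_{2}^{ij}r_{i}r_{j}\le(f\sigma_{1}-\sigma_{3})(1+Cr),
\end{equation}
which is the analogue of $|\nabla^{M}r|\le 1$. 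Multiplying by $\sigma_{1}$, integrating, and feeding in the subsolution inequality for $\sigma_{1}$ then yields the monotonicity
\begin{equation}
\delta^{-3}\int_{\mathfrak{B}_{\delta}}\sigma_{1}(f\sigma_{1}-\sigma_{3})\,dx\le CR^{-3}\int_{\mathfrak{B}_{R}}\sigma_{1}(f\sigma_{1}-\sigma_{3})\,dx,
\end{equation}
and letting $\delta\to 0$ gives the theorem. Your outline neither identifies the correct test function $\psi$ nor proves any version of the gradient bound on $r$, and the iteration step you propose would require exactly the Sobolev input the paper says is not available. As written, the proposal does not prove the statement.
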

\begin{equation}
\sup_{B_{1}}\sigma_{1}=\sigma_{1}(y_{0})\leq C\int_{B_{1}(y_{0})}\sigma_{1}(x)(\sigma_{1}f-\sigma_{3})dx.\label{eq:meanvalue}
\end{equation}

We prove this theorem similar as Michael-Simon \cite{michael1973sobolev}. 
\begin{proof}
First we know from concavety of $\sigma_{2}^{\frac{1}{2}}$, we have
\begin{eqnarray}
\sigma_{2}^{ij}(\sigma_{1}){}_{ij} & \geq & \triangle f-\frac{|Df|^{2}}{2f}\\
 & \geq & -C(\triangle u)^{2}+f_{u_{i}}(\triangle u)_{i}.
\end{eqnarray}
Let $\chi$ be a non-negative, non-decreasing function in $C^{1}(\mathbb{R})$
with support in the interval $(0,\infty)$ and set 

\begin{equation}
\psi(r)=\int_{r}^{\infty}t\chi(\rho-t)dt
\end{equation}

where $0<\rho<10,$ $r^{2}=f(x,u(x),Du(x))|x-y_{0}|^{2}+|Du(x)-Du(y_{0})|^{2}$.
Let us denote $\mathfrak{B}_{r}=\{x\in B_{10}(y_{0})|f(x,u,Du)|x-y_{0}|^{2}+|Du(x)-Du(y_{0})|^{2}\leq r^{2}\}$.
We may assume that $\bar{y}_{0}:=(y_{0},Du(y_{0}))=(0,0)$.

By direct computation, we have
\begin{equation}
2rr_{i}=f_{i}|x|^{2}+2fx_{i}+2u_{k}u_{kj},\label{eq:r1}
\end{equation}

and 
\begin{equation}
2r_{i}r_{j}+2rr_{ij}=f_{ij}|x|^{2}+2f_{i}x_{j}+2f_{j}x_{i}+2f\delta_{ij}+2u_{ki}u_{kj}+2u_{k}u_{kij}.\label{eq:r2}
\end{equation}

Because $\lambda_{2}+\lambda_{3}>0$, we may assume $\lambda_{3}<0$,
$\lambda_{1}\geq\lambda_{2}\geq\lambda_{3}$ and $\lambda_{1}$ large,
\begin{eqnarray}
f\sigma_{1}-\sigma_{3} & \geq & f\lambda_{1}+\lambda_{1}\lambda_{3}^{2}\\
 & \geq & -c\lambda_{1}\lambda_{3}\\
 & \geq & -c\lambda_{2}\lambda_{3}.\label{eq:lambda1}
\end{eqnarray}

By equation we also have 
\begin{equation}
\lambda_{2}\lambda_{1}=f-\lambda_{1}\lambda_{3}-\lambda_{2}\lambda_{3}\leq C(f\sigma_{1}-\sigma_{3}).\label{eq:lambda2}
\end{equation}

We then have from (\ref{eq:r1}), (\ref{eq:r2}), (\ref{eq:lambda1}),
(\ref{eq:lambda2}) and Lemma \ref{lem1} 
\begin{eqnarray}
\sigma_{2}^{ij}\psi_{ij} & = & \sigma_{2}^{ij}(-r_{i}r\chi(\rho-r))_{j}\\
 & = & -\sigma_{2}^{ij}r_{ij}r\chi(\rho-r)-\sigma_{2}^{ij}r_{i}r_{j}\chi(\rho-r)+\sigma_{2}^{ij}r_{i}r_{j}r\chi^{\prime}(\rho-r)\\
 & = & -\chi(\rho-r)\sigma_{2}^{ij}(f\delta_{ij}+u_{ki}u_{kj}+\frac{f_{ij}|x|^{2}}{2}+2f_{i}x_{j})\\
 &  & -\chi(\rho-r)\sigma_{2}^{ij}u_{k}u_{kij}+\sigma_{2}^{ij}r_{i}r_{j}r\chi^{\prime}(\rho-r)\\
 & \leq & -3(\sigma_{1}f-\sigma_{3})\chi+C(r^{2}\chi+r\chi)(f\sigma_{1}-\sigma_{3})+\sigma_{2}^{ij}r_{i}r_{j}r\chi^{\prime}.\label{eq:psi}
\end{eqnarray}

We claim that 
\begin{equation}
\sigma_{2}^{ij}r_{i}r_{j}\leq(f\sigma_{1}-\sigma_{3})(1+Cr).\label{eq:cla}
\end{equation}

In fact, 
\begin{eqnarray}
\frac{\sigma_{2}^{ij}(f_{i}|x|^{2}+2fx_{i}+2u_{k}u_{kj})(f_{j}|x|^{2}+2fx_{j}+2u_{l}u_{lj})}{4r^{2}}\\
\leq C(f\sigma_{1}-\sigma_{3})r+\frac{\sigma_{2}^{ii}(fx_{i}+u_{i}u_{ii})^{2}}{r^{2}}.
\end{eqnarray}

Moreover, we have following elementary properties
\begin{equation}
(f\sigma_{1}-\sigma_{3})\delta_{ij}-f\sigma_{2}^{ij}=\sigma_{2}^{kl}u_{ki}u_{lj}.
\end{equation}

and 
\begin{equation}
f\sigma_{2}^{ii}u_{ii}^{2}x_{i}^{2}-2f\sigma_{2}^{ii}x_{i}u_{i}u_{ii}+f\sigma_{2}^{ii}u_{i}^{2}\geq f\sigma_{2}^{ii}(u_{ii}x_{i}-u_{i})^{2}.
\end{equation}

Then we have 
\begin{equation}
\frac{\sigma_{2}^{ii}(fx_{i}+u_{i}u_{ii})^{2}}{r^{2}}\leq(f\sigma_{1}-\sigma_{3}).
\end{equation}

We obtain from (\ref{eq:cla}) and (\ref{eq:psi}) that 
\begin{equation}
\sigma_{2}^{ij}\psi_{ij}\leq(\sigma_{1}f-\sigma_{3})[-3\chi+C(r^{2}\chi+r\chi)+(1+r)r\chi^{\prime}].
\end{equation}

Then mutiply both side by $\sigma_{1}$ and integral on the domain
$\mathfrak{B}_{10}$,
\begin{eqnarray}
\int_{\mathfrak{B}_{10}}\sigma_{1}\sigma_{2}^{ij}\psi_{ij}dx & \leq & \rho^{4}\frac{d}{d\rho}(\int_{\mathfrak{B}_{10}}\frac{\sigma_{1}\chi(\rho-r)}{\rho^{3}}(\sigma_{1}f-\sigma_{3})dx)\label{eq:mean}\\
 &  & +C\int_{\mathfrak{B}_{10}}\sigma_{1}r^{2}\chi^{\prime}(\sigma_{1}f-\sigma_{3})dx\\
 &  & +C\int_{\mathfrak{B}_{10}}r\sigma_{1}\chi(\rho-r)(\sigma_{1}f-\sigma_{3})dx.
\end{eqnarray}

By (\ref{eq:logb}), we have 
\begin{equation}
-C\int_{\mathfrak{B}_{10}}\sigma_{1}^{2}\psi dx+\int_{\mathfrak{B}_{10}}f_{u_{i}}(\sigma_{1})_{i}\mbox{\ensuremath{\psi}}\leq\int_{\mathfrak{B}_{10}}\sigma_{1}\sigma_{2}^{ij}\psi_{ij}dx.\label{eq:step3}
\end{equation}

Inserting (\ref{eq:step3}) into (\ref{eq:mean}), we get 
\begin{eqnarray}
-\frac{d}{d\rho}(\int_{\mathfrak{B}_{10}}\frac{\sigma_{1}\chi(\rho-r)}{\rho^{3}}(\sigma_{1}f-\sigma_{3})dx) & \leq & \frac{C\int_{\mathfrak{B}_{10}}r\sigma_{1}\chi(\rho-r)(\sigma_{1}f-\sigma_{3})dx}{\rho^{4}}.\\
 & + & \frac{C\int_{\mathfrak{B}_{10}}\sigma_{1}r^{2}\chi^{\prime}(\sigma_{1}f-\sigma_{3})dx}{\rho^{4}}\\
 & + & \frac{C\int_{\mathfrak{B}_{10}}\sigma_{1}^{2}\psi dx-\int_{\mathfrak{B}_{10}}f_{u_{i}}(\sigma_{1})_{i}\psi dx}{\rho^{4}}.
\end{eqnarray}

Because $\chi$, $\chi^{\prime}$ and $\psi$ are all supported in
$\mathfrak{B}_{\rho}$, we deal with right hand side of above inequality
term by term. For the first term, we have 
\begin{equation}
\frac{C\int_{\mathfrak{B}_{10}}r\sigma_{1}\chi(\rho-r)(\sigma_{1}f-\sigma_{3})dx}{\rho^{4}}\leq C\frac{\int_{\mathfrak{B}_{10}}\sigma_{1}\chi(\rho-r)(\sigma_{1}f-\sigma_{3})dx}{\rho^{3}}.\label{eq:term1}
\end{equation}

Then for the second term, we integral from $\delta$ to $R$, 

\begin{eqnarray}
\int_{\delta}^{R}\frac{\int_{\mathfrak{B}_{10}}\sigma_{1}r^{2}\chi^{\prime}(\sigma_{1}f-\sigma_{3})dx}{\rho^{4}}d\rho\\
\leq\int_{\delta}^{R}\frac{\int_{\mathfrak{B}_{10}}\sigma_{1}\chi^{\prime}(\sigma_{1}f-\sigma_{3})dx}{\rho^{2}}d\rho\\
\leq\frac{\int_{\mathfrak{B}_{10}}\sigma_{1}\chi(\sigma_{1}f-\sigma_{3})dx}{\rho^{2}}|_{\delta}^{R}\\
+\int_{\delta}^{R}\frac{2\int_{\mathfrak{B}_{10}}\sigma_{1}\chi(\sigma_{1}f-\sigma_{3})dx}{\rho^{3}}d\rho.\label{eq:term2}
\end{eqnarray}

For the last term, we use (\ref{eq:r1}) and definition of $\psi$
to estimate 
\begin{eqnarray}
\frac{C\int_{\mathfrak{B}_{10}}\sigma_{1}^{2}\psi dx-\int_{\mathfrak{B}_{10}}f_{u_{i}}(\sigma_{1})_{i}\psi dx}{\rho^{4}} & \leq & \frac{C\int_{\mathfrak{B}_{10}}\sigma_{1}^{2}\psi dx-\int_{\mathfrak{B}_{10}}f_{u_{i}}r_{i}\sigma_{1}r\chi dx}{\rho^{4}}\\
 & \leq & \frac{C\int_{\mathfrak{B}_{10}}\sigma_{1}^{2}\psi dx+\int_{\mathfrak{B}_{10}}\sigma_{1}^{2}r\chi dx}{\rho^{4}}\\
 & \leq & \frac{C\int_{\mathfrak{B}_{10}}\sigma_{1}\chi(\rho-r)(\sigma_{1}f-\sigma_{3})dx}{\rho^{3}}.\label{eq:term3}
\end{eqnarray}

Then we combine (\ref{eq:term1}), (\ref{eq:term2}) and (\ref{eq:term3})
with integrating from $0\leq\delta$ to $R\leq10$, 
\begin{equation}
\int_{\mathfrak{B}_{10}}\frac{\sigma_{1}(\sigma_{1}f-\sigma_{3})\chi(\delta-r)}{\delta^{3}}dx\leq C\int_{\mathfrak{B}_{10}}\frac{\sigma_{1}(\sigma_{1}f-\sigma_{3})\chi(R-r)}{R^{3}}dx
\end{equation}

Letting $\chi$ approximate the characteristic function of the interval
$(-\infty,0)$, in an appropriate fashion, we obtain, 
\begin{equation}
\frac{\int_{\mathfrak{B}_{\delta}}\sigma_{1}(\sigma_{1}f-\sigma_{3})}{\delta^{3}}\leq C\frac{\int_{\mathfrak{B}_{R}}\sigma_{1}(\sigma_{1}f-\sigma_{3})}{R^{3}}.\label{eq:monotonicity}
\end{equation}

Because the graph $(x,Du)$ where $u$ satisfied equation (\ref{eq:sigma2})
can be viewed as a three dimensional smooth submanifold in $(\mathbb{R}^{3}\times\mathbb{R}^{3},fdx^{2}+dy^{2})$
with volume form exactly $\sigma_{1}f-\sigma_{3}$. We note that the
boundedness of the mean curvature is encoded in the above proof. Moreover,
the geodesic ball with radius $\delta$ of this submanifold is comparable
with $\mathfrak{B}_{\delta}$. Then let $\delta\rightarrow0$, we
finally get 
\begin{eqnarray*}
\sigma_{1}(y_{0}) & \leq & C\frac{\int_{\mathfrak{B}_{R}(\bar{y}_{0})}\sigma_{1}(\sigma_{1}f-\sigma_{3})dx}{R^{3}}\leq C\frac{\int_{B_{R}(y_{0})}\sigma_{1}(\sigma_{1}f-\sigma_{3})dx}{R^{3}}.
\end{eqnarray*}

\end{proof}

\section{Proof of the theroem}
\begin{proof}
From Theorem \ref{thm-meanvalue}, Lemma \ref{lem5} and Lemma (\ref{lem4}),
we have at maximum point $x_{0}$ of $\bar{B}_{1}(0)$ 
\begin{eqnarray}
M_{1} & \leq & \int_{B_{1}(x_{0})}\sigma_{1}(\sigma_{1}f-\sigma_{3})dx\\
 & \leq & \frac{M_{5}}{\log^{\frac{1}{2}}M_{5}}\int_{B_{1}(x_{0})}\log(\sigma_{1}f-\sigma_{3})\sigma_{1}dx\\
 & \leq & \frac{CM_{1}}{\log^{\frac{1}{2}}M_{1}}\int_{B_{1}(x_{0})}\sigma_{1}\log\sigma_{1}dx.\label{eq:WY1}
\end{eqnarray}
Recall that 
\begin{equation}
\sigma_{2}^{ij}b{}_{ij}\geq\frac{1}{100}\sigma_{2}^{ij}b{}_{i}b{}_{j}-C\sigma_{1}+\sum_{i}f_{u_{i}}b_{i},\label{eq:logb-1}
\end{equation}
We have integral version of this inequality for any $r<5$,
\begin{equation}
\int_{B_{r}}-\sigma_{2}^{ij}\phi_{i}b_{i}\geq c_{0}\int_{B_{r}}\phi\sigma_{2}^{ij}b_{i}b_{j}-C(1+\int_{B_{r}}\sum_{i}f_{u_{i}}b_{i}\phi).\label{eq:intlog}
\end{equation}
for all non-negative $\phi\in C_{0}^{\infty}$. We choose different
cutoff functions all are denoted $0\le\phi\leq1$, which support in
larger ball $B_{r+1}(x_{0})$ and equals to $1$ in smaller ball $B_{r}(x_{0})$
and $|\nabla\phi|+|\nabla^{2}\phi|\leq C$. 
\begin{eqnarray}
\int_{B_{1}(x_{0})}b\sigma_{1} & \leq & \int_{B_{2}(x_{0})}\phi b\sigma_{1}\\
 & \leq & C(\int_{B_{2}(x_{0})}b+\int_{B_{2}(x_{0})}|\nabla b|)\\
 & \leq & C(1+\int_{B_{2}(x_{0})}|\nabla b|).\label{eq:WY2}
\end{eqnarray}
We only need to estimate $\int_{B_{2}(x_{0})}|\nabla b|$. We use
\begin{equation}
\sigma_{1}\sigma_{2}^{ij}\geq c\delta_{ij},
\end{equation}
to get 
\begin{equation}
\int_{B_{2}(x_{0})}|\nabla b|\leq\int_{B_{2}(x_{0})}\sqrt{\sigma_{2}^{ij}b_{i}b_{j}}\sqrt{\sigma_{1}}.
\end{equation}
By Holder inequality, we use (\ref{eq:logb-1}), 
\begin{eqnarray}
\int_{B_{2}(x_{0})}|\nabla b| & \leq & (\int_{B_{2}(x_{0})}\sigma_{2}^{ij}b_{i}b_{j})^{\frac{1}{2}}(\int_{B_{2}(x_{0})}\sigma_{1})^{\frac{1}{2}}\\
 & \leq & C\int_{B_{3}(x_{0})}\phi^{2}\sigma_{2}^{ij}b_{i}b_{j}.\label{eq:WY3}
\end{eqnarray}
Then using (\ref{eq:intlog}), we get 
\begin{eqnarray}
\int_{B_{3}(x_{0})}\phi^{2}\sigma_{2}^{ij}b_{i}b_{j} & \leq & C(\int_{B_{3}(x_{0})}\phi^{2}\sigma_{2}^{ij}b_{ij}+1+\int_{B_{3}(x_{0})}\phi^{2}|\nabla b|)\\
 & \leq & C(\int_{B_{3}(x_{0})}-\phi\sigma_{2}^{ij}\phi_{i}b_{i}+1\\
 &  & +\int_{B_{3}(x_{0})}\phi^{2}\sqrt{\sigma_{2}^{ij}b_{i}b_{j}}\sqrt{\sigma_{1}})
\end{eqnarray}
By Cauchy-Schwarz inequality
\begin{eqnarray}
\int_{B_{3}(x_{0})}\phi^{2}\sigma_{2}^{ij}b_{i}b_{j} & \leq & C(\epsilon\int_{B_{3}(x_{0})}\phi^{2}\sigma_{2}^{ij}b_{i}b_{j}+\int_{B_{3}(x_{0})}\sigma_{2}^{ij}\phi_{i}\phi_{i}+\frac{1}{\epsilon})\\
 & \leq & C\epsilon\int_{B_{3}(x_{0})}\phi^{2}\sigma_{2}^{ij}b_{i}b_{j}+\frac{C}{\epsilon}.
\end{eqnarray}
We Choose $\epsilon$ small such that $C\epsilon\leq\frac{1}{2}$,
\begin{eqnarray}
\int_{B_{3}(x_{0})}\phi^{2}\sigma_{2}^{ij}b_{i}b_{j} & \leq & C.\label{eq:WY4}
\end{eqnarray}
Finally, combine (\ref{eq:WY1}), (\ref{eq:WY2}), (\ref{eq:WY3})
and (\ref{eq:WY4}), we get the estimate 
\begin{equation}
\log M_{1}\leq C.
\end{equation}

\end{proof}

\section{Appendix}

We prove a differential Equation of $V=\log(f\sigma_{1}-\sigma_{3})$
in Lemma \ref{lem5}.
\begin{proof}
As in Lemma \ref{lem3}, we assume that $\{u_{ij}\}$ is diagonal.
The differential equations of $V$ are
\begin{equation}
V{}_{i}=\frac{(\sigma_{1})_{i}f+\sigma_{1}f_{i}-(\sigma_{3})_{i}}{\sigma_{1}f-\sigma_{3}},
\end{equation}
and
\begin{eqnarray}
V{}_{ij} & = & \frac{(\sigma_{1})_{ij}f+(\sigma_{1})_{i}f_{j}+(\sigma_{1})_{j}f_{i}+\sigma_{1}f_{ij}-(\sigma_{3})_{ij}}{\sigma_{1}f-\sigma_{3}}\\
 &  & -\frac{[(\sigma_{1})_{i}f+\sigma_{1}f_{i}-(\sigma_{3})_{i}][(\sigma_{1})_{j}f+\sigma_{1}f_{j}-(\sigma_{3})_{j}]}{(\sigma_{1}f-\sigma_{3})^{2}}.
\end{eqnarray}
We contract with $\sigma_{2}^{ij}$ and use (\ref{eq:f2}), 
\begin{eqnarray}
\sigma_{2}^{ij}V{}_{ij} & = & \frac{f\sigma_{2}^{ij}u_{kkij}+2\sigma_{2}^{ii}(\sigma_{1})_{i}f_{i}+\sigma_{1}\sigma_{2}^{ij}f_{ij}-\sigma_{2}^{ij}(\sigma_{3})_{ij}}{\sigma_{1}f-\sigma_{3}}\\
 &  & -\frac{\sigma_{2}^{ii}[(\sigma_{1})_{i}f+\sigma_{1}f_{i}-(\sigma_{3})_{i}]^{2}}{(\sigma_{1}f-\sigma_{3})^{2}}\\
 & = & \frac{-f\sum\limits _{i}\sum\limits _{k\neq p}u_{kki}u_{ppi}+f\sum\limits _{i}\sum\limits _{k\neq p}u_{kpi}^{2}+2\sigma_{2}^{ii}(\sigma_{1})_{i}f_{i}}{\sigma_{1}f-\sigma_{3}}\\
 &  & \frac{-\sum\limits _{k\neq p}\sigma_{2}^{ii}\sigma_{1}(\lambda|kp)u_{kki}u_{ppi}+\sum\limits _{k\neq p}\sigma_{2}^{ii}\sigma_{1}(\lambda|kp)u_{kpi}^{2}}{\sigma_{1}f-\sigma_{3}}\\
 &  & +\frac{\sigma_{1}\sigma_{2}^{ij}f_{ij}}{\sigma_{1}f-\sigma_{3}}-\frac{\sigma_{3}^{ii}(\sum\limits _{k\neq p}u_{kpi}^{2}-\sum\limits _{k\neq p}u_{kki}u_{ppi})}{\sigma_{1}f-\sigma_{3}}\\
 &  & -\frac{\sigma_{2}^{ii}[(\sigma_{1})_{i}f+\sigma_{1}f_{i}-(\sigma_{3})_{i}]^{2}}{(\sigma_{1}f-\sigma_{3})^{2}}+\frac{f\triangle f-\sigma_{3}^{ij}f_{ij}}{\sigma_{1}f-\sigma_{3}}.
\end{eqnarray}
We devided above equation into following four parts, 
\begin{eqnarray}
I: & = & \frac{\sum\limits _{i}\sum\limits _{k\neq p}(-f-\sigma_{2}^{ii}\sigma_{1}(\lambda|kp)+\sigma_{3}^{ii})u_{kki}u_{ppi}}{\sigma_{1}f-\sigma_{3}}\\
 & = & \frac{-\sum\limits _{i}\sum\limits _{k\neq p}\sigma_{2}^{ii}(\lambda_{i}+\sigma_{1}(\lambda|kp))u_{kki}u_{ppi}}{\sigma_{1}f-\sigma_{3}}.
\end{eqnarray}
\begin{eqnarray}
II: & = & \frac{\sum\limits _{i}\sum\limits _{k\neq p}(f+\sigma_{2}^{ii}\sigma_{1}(\lambda|kp)-\sigma_{3}^{ii})u_{kpi}^{2}}{\sigma_{1}f-\sigma_{3}}\\
 & = & \frac{\sum\limits _{i}\sum\limits _{k\neq p}\sigma_{2}^{ii}(\lambda_{i}+\sigma_{1}(\lambda|kp))u_{kpi}^{2}}{\sigma_{1}f-\sigma_{3}}.
\end{eqnarray}
\begin{eqnarray}
III: & = & -\frac{\sigma_{2}^{ii}[(\sigma_{1})_{i}f+\sigma_{1}f_{i}-(\sigma_{3})_{i}]^{2}}{(\sigma_{1}f-\sigma_{3})^{2}}\\
 & = & -\frac{\sigma_{2}^{ii}[\sum\limits _{k}\lambda_{k}\sigma_{2}^{kk}u_{kki}+\sigma_{1}f_{i}]^{2}}{(\sigma_{1}f-\sigma_{3})^{2}}.
\end{eqnarray}
\begin{equation}
IV:=\frac{2\sigma_{2}^{ii}(\sigma_{1})_{i}f_{i}}{\sigma_{1}f-\sigma_{3}}+\frac{\sigma_{1}\sigma_{2}^{ij}f_{ij}}{\sigma_{1}f-\sigma_{3}}+\frac{f\Delta f-\sigma_{3}^{ij}f_{ij}}{\sigma_{1}f-\sigma_{3}}.
\end{equation}
Then we use following equation to replace the $u_{kkk}$ term above,
\begin{equation}
\sum_{i\neq k}\sigma_{2}^{ii}u_{iik}+\sigma_{2}^{kk}u_{kkk}=f_{k}.
\end{equation}
We have 
\begin{eqnarray}
I & = & \frac{-2\sum\limits _{k\neq p,i\neq p,i\neq k}\sigma_{2}^{ii}\lambda_{i}u_{kki}u_{ppi}}{\sigma_{1}f-\sigma_{3}}\\
 &  & -\frac{2\sum\limits _{p}\sum\limits _{k\neq p}\sigma_{2}^{kk}u_{kkp}\sigma_{2}^{pp}u_{ppp}}{\sigma_{1}f-\sigma_{3}}\\
 & = & \frac{-2\sum\limits _{k\neq p,i\neq p,i\neq k}\sigma_{2}^{ii}\lambda_{i}u_{kki}u_{ppi}}{\sigma_{1}f-\sigma_{3}}\\
 &  & -\frac{2\sum\limits _{p}\sum\limits _{k\neq p}\sigma_{2}^{kk}u_{kkp}(f_{p}-\sum\limits _{i\neq p}\sigma_{2}^{ii}u_{iip})}{\sigma_{1}f-\sigma_{3}}\\
 & = & \frac{-2\sum_{k\neq p,i\neq p,i\neq k}\sigma_{2}^{ii}\lambda_{i}u_{kki}u_{ppi}}{\sigma_{1}f-\sigma_{3}}\\
 &  & -\frac{2\sum_{k\neq p}\sigma_{2}^{kk}u_{kkp}f_{p}}{\sigma_{1}f-\sigma_{3}}+2\sum_{p}\frac{\sum_{k\neq p}(\sigma_{2}^{kk})^{2}(u_{kkp})^{2}}{\sigma_{1}f-\sigma_{3}}\\
 &  & +2\frac{\sum\limits _{k\neq p,i\neq p,i\neq k}\sigma_{2}^{kk}\sigma_{2}^{ii}u_{iip}u_{kkp}}{\sigma_{1}f-\sigma_{3}}\\
 & = & \frac{2\sum\limits _{k\neq p,i\neq p,i\neq k}(-\sigma_{2}^{ii}\lambda_{i}+\sigma_{2}^{kk}\sigma_{2}^{pp})u_{kki}u_{ppi}}{\sigma_{1}f-\sigma_{3}}\\
 &  & -\frac{2\sum\limits _{k\neq p}\sigma_{2}^{kk}u_{kkp}f_{p}}{\sigma_{1}f-\sigma_{3}}+2\sum_{p}\frac{\sum\limits _{k\neq p}(\sigma_{2}^{kk})^{2}(u_{kkp})^{2}}{\sigma_{1}f-\sigma_{3}}.
\end{eqnarray}
And 
\begin{eqnarray}
II & = & \frac{2\sum\limits _{k\neq p,i\neq p,i\neq k}\sigma_{2}^{ii}\lambda_{i}u_{kpi}^{2}}{\sigma_{1}f-\sigma_{3}}\\
 &  & +\frac{2\sum_{k\neq p}\sigma_{2}^{pp}\sigma_{2}^{kk}u_{kpp}^{2}}{\sigma_{1}f-\sigma_{3}}\\
 & = & \frac{4fu_{123}^{2}}{\sigma_{1}f-\sigma_{3}}+\frac{2\sum_{k\neq p}\sigma_{2}^{pp}\sigma_{2}^{kk}u_{kpp}^{2}}{\sigma_{1}f-\sigma_{3}}.
\end{eqnarray}
\begin{eqnarray}
III & = & -\frac{\sigma_{2}^{ii}[\sum_{k\neq i}\lambda_{k}\sigma_{2}^{kk}u_{kki}+\lambda_{i}(f_{i}-\sigma_{2}^{kk}u_{kki})+\sigma_{1}f_{i}]^{2}}{(\sigma_{1}f-\sigma_{3})^{2}}\\
 & = & -\frac{\sigma_{2}^{ii}[\sum_{k\neq i}(\lambda_{k}-\lambda_{i})\sigma_{2}^{kk}u_{kki}+\lambda_{i}f_{i}+\sigma_{1}f_{i}]^{2}}{(\sigma_{1}f-\sigma_{3})^{2}}.
\end{eqnarray}
There is an identity 
\begin{equation}
\sigma_{1}f-\sigma_{3}=\sigma_{2}^{11}\sigma_{2}^{22}\sigma_{2}^{33}.
\end{equation}
Then we get 
\begin{eqnarray}
III & = & -\frac{(\lambda_{3}-\lambda_{1})^{2}\sigma_{2}^{33}u_{331}^{2}}{(\sigma_{1}f-\sigma_{3})\sigma_{2}^{22}}-2\frac{(\lambda_{3}-\lambda_{1})(\lambda_{2}-\lambda_{1})u_{221}u_{331}}{\sigma_{1}f-\sigma_{3}}\\
 &  & -\frac{(\lambda_{2}-\lambda_{1})^{2}\sigma_{2}^{22}u_{221}^{2}}{(\sigma_{1}f-\sigma_{3})\sigma_{2}^{33}}+\frac{2(\lambda_{1}+\sigma_{1})f_{1}(\lambda_{1}-\lambda_{3})u_{331}}{(\sigma_{1}f-\sigma_{3})\sigma_{2}^{22}}\\
 &  & +\frac{2(\lambda_{1}+\sigma_{1})f_{1}(\lambda_{1}-\lambda_{2})u_{221}}{(\sigma_{1}f-\sigma_{3})\sigma_{2}^{33}}-\frac{\sigma_{2}^{11}(\lambda_{1}+\sigma_{1})^{2}f_{1}^{2}}{(\sigma_{1}f-\sigma_{3})^{2}}\\
 &  & -\frac{(\lambda_{3}-\lambda_{2})^{2}\sigma_{2}^{33}u_{332}^{2}}{(\sigma_{1}f-\sigma_{3})\sigma_{2}^{11}}-\frac{2(\lambda_{3}-\lambda_{2})(\lambda_{1}-\lambda_{2})u_{112}u_{332}}{\sigma_{1}f-\sigma_{3}}\\
 &  & -\frac{(\lambda_{1}-\lambda_{2})\sigma_{2}^{11}u_{112}^{2}}{(\sigma_{1}f-\sigma_{3})\sigma_{2}^{33}}+\frac{2(\lambda_{2}+\sigma_{1})f_{2}(\lambda_{2}-\lambda_{3})u_{332}}{(\sigma_{1}f-\sigma_{3})\sigma_{2}^{11}}\\
 &  & +\frac{2(\lambda_{2}-\lambda_{1})u_{112}f_{2}(\sigma_{1}+\lambda_{2})}{(\sigma_{1}f-\sigma_{3})\sigma_{2}^{33}}-\frac{\sigma_{2}^{22}(\lambda_{2}+\sigma_{1})^{2}f_{2}^{2}}{(\sigma_{1}f-\sigma_{3})^{2}}\\
 &  & -\frac{(\lambda_{1}-\lambda_{3})^{2}\sigma_{2}^{11}u_{113}^{2}}{(\sigma_{1}f-\sigma_{3})\sigma_{2}^{22}}-\frac{(\lambda_{2}-\lambda_{3})^{2}\sigma_{2}^{22}u_{223}^{2}}{(\sigma_{1}f-\sigma_{3})\sigma_{2}^{11}}\\
 &  & -\frac{2(\lambda_{1}-\lambda_{3})(\lambda_{2}-\lambda_{3})u_{113}u_{223}}{\sigma_{1}f-\sigma_{3}}-\frac{\sigma_{2}^{33}(\lambda_{3}+\sigma_{1})^{2}f_{3}^{2}}{(\sigma_{1}f-\sigma_{3})^{2}}\\
 &  & +\frac{2(\lambda_{3}-\lambda_{1})u_{113}(\lambda_{3}+\sigma_{1})f_{3}}{(\sigma_{1}f-\sigma_{3})\sigma_{2}^{22}}+\frac{2(\lambda_{3}-\lambda_{2})u_{223}(\lambda_{3}+\sigma_{1})f_{3}}{(\sigma_{1}f-\sigma_{3})\sigma_{2}^{11}}.
\end{eqnarray}
We can also compute $I$ and $II$ explicitly, 
\begin{eqnarray}
I & = & -\frac{2\sum\limits _{k\neq1}\sigma_{2}^{kk}u_{kk1}f_{1}}{\sigma_{1}f-\sigma_{3}}-\frac{2\sum\limits _{k\neq2}\sigma_{2}^{kk}u_{kk2}f_{2}}{\sigma_{1}f-\sigma_{3}}-\frac{2\sum\limits _{k\neq3}\sigma_{2}^{kk}u_{kk3}f_{3}}{\sigma_{1}f-\sigma_{3}}\\
 &  & +2\frac{(\sigma_{2}^{22})^{2}(u_{221})^{2}}{\sigma_{1}f-\sigma_{3}}+2\frac{(\sigma_{2}^{33})^{2}(u_{331})^{2}}{\sigma_{1}f-\sigma_{3}}+2\frac{(\sigma_{2}^{11})^{2}(u_{112})^{2}}{\sigma_{1}f-\sigma_{3}}\\
 &  & +2\frac{(\sigma_{2}^{33})^{2}(u_{332})^{2}}{\sigma_{1}f-\sigma_{3}}+2\frac{(\sigma_{2}^{11})^{2}(u_{113})^{2}}{\sigma_{1}f-\sigma_{3}}+2\frac{(\sigma_{2}^{22})^{2}(u_{223})^{2}}{\sigma_{1}f-\sigma_{3}}\\
 &  & +\frac{4(-\sigma_{2}^{11}\lambda_{1}+\sigma_{2}^{22}\sigma_{2}^{33})u_{221}u_{331}}{\sigma_{1}f-\sigma_{3}}+\frac{4(-\sigma_{2}^{22}\lambda_{2}+\sigma_{2}^{11}\sigma_{2}^{33})u_{112}u_{332}}{\sigma_{1}f-\sigma_{3}}\\
 &  & +\frac{4(-\sigma_{2}^{33}\lambda_{3}+\sigma_{2}^{22}\sigma_{2}^{11})u_{223}u_{113}}{\sigma_{1}f-\sigma_{3}}.
\end{eqnarray}
And
\begin{eqnarray}
II & = & \frac{4fu_{123}^{2}}{\sigma_{1}f-\sigma_{3}}+\frac{2\sum\limits _{k\neq p}\sigma_{2}^{pp}\sigma_{2}^{kk}u_{kpp}^{2}}{\sigma_{1}f-\sigma_{3}}\\
 & = & \frac{4fu_{123}^{2}}{\sigma_{1}f-\sigma_{3}}+\frac{2\sigma_{2}^{11}\sigma_{2}^{22}u_{211}^{2}}{\sigma_{1}f-\sigma_{3}}+\frac{2\sigma_{2}^{11}\sigma_{2}^{33}u_{311}^{2}}{\sigma_{1}f-\sigma_{3}}\\
 &  & +\frac{2\sigma_{2}^{22}\sigma_{2}^{11}u_{122}^{2}}{\sigma_{1}f-\sigma_{3}}+\frac{2\sigma_{2}^{22}\sigma_{2}^{33}u_{322}^{2}}{\sigma_{1}f-\sigma_{3}}\\
 &  & +\frac{2\sigma_{2}^{33}\sigma_{2}^{11}u_{133}^{2}}{\sigma_{1}f-\sigma_{3}}+\frac{2\sigma_{2}^{33}\sigma_{2}^{22}u_{233}^{2}}{\sigma_{1}f-\sigma_{3}}.
\end{eqnarray}
We also have from Lemma \ref{lem1},
\begin{eqnarray}
IV & \geq & \frac{2\sigma_{2}^{11}(\sigma_{1})_{1}f_{1}}{\sigma_{1}f-\sigma_{3}}+\frac{2\sigma_{2}^{22}(\sigma_{1})_{2}f_{2}}{\sigma_{1}f-\sigma_{3}}+\frac{2\sigma_{2}^{33}(\sigma_{1})_{3}f_{3}}{\sigma_{1}f-\sigma_{3}}\\
 &  & -C\sigma_{1}+\frac{f_{u_{i}}[\sigma_{1}f_{i}+f(\sigma_{1})_{i}-(\sigma_{3})_{i}]}{\sigma_{1}f-\sigma_{3}}\\
 & = & \frac{2\sigma_{2}^{11}(\frac{f_{1}-\sigma_{2}^{22}u_{221}-\sigma_{2}^{33}u_{331}}{\sigma_{2}^{11}}+u_{221}+u_{331})f_{1}}{\sigma_{1}f-\sigma_{3}}\\
 &  & +\frac{2\sigma_{2}^{22}(\frac{f_{2}-\sigma_{2}^{11}u_{112}-\sigma_{2}^{33}u_{332}}{\sigma_{2}^{22}}+u_{112}+u_{332})f_{2}}{\sigma_{1}f-\sigma_{3}}\\
 &  & +\frac{2\sigma_{2}^{33}(\frac{f_{3}-\sigma_{2}^{22}u_{223}-\sigma_{2}^{11}u_{113}}{\sigma_{2}^{33}}+u_{223}+u_{113})f_{3}}{\sigma_{1}f-\sigma_{3}}\\
 &  & -C\sigma_{1}+f_{u_{i}}V_{i}\\
 & = & \frac{2((\lambda_{2}-\lambda_{1})u_{221}+(\lambda_{3}-\lambda_{1})u_{331})f_{1}}{\sigma_{1}f-\sigma_{3}}\\
 &  & +\frac{2((\lambda_{1}-\lambda_{2})u_{112}+(\lambda_{3}-\lambda_{2})u_{332})f_{2}}{\sigma_{1}f-\sigma_{3}}\\
 &  & +\frac{2((\lambda_{2}-\lambda_{3})u_{223}+(\lambda_{1}-\lambda_{3})u_{113})f_{3}}{\sigma_{1}f-\sigma_{3}}\\
 &  & -C\sigma_{1}+f_{u_{i}}V_{i}.
\end{eqnarray}
So we combine above inequalities 
\begin{eqnarray}
\sigma_{2}^{ij}V{}_{ij} & \geq & \text{-}\frac{4\lambda_{2}u_{221}f_{1}\sigma_{2}^{22}}{(\sigma_{1}f-\sigma_{3})\sigma_{2}^{33}}-\frac{4\lambda_{3}u_{331}f_{1}\sigma_{2}^{33}}{(\sigma_{1}f-\sigma_{3})\sigma_{2}^{22}}\\
 &  & \text{-}\frac{4\lambda_{1}u_{112}f_{2}\sigma_{2}^{11}}{(\sigma_{1}f-\sigma_{3})\sigma_{2}^{33}}-\frac{4\lambda_{3}u_{332}f_{2}\sigma_{2}^{33}}{(\sigma_{1}f-\sigma_{3})\sigma_{2}^{11}}\\
 &  & -\frac{4\lambda_{1}u_{113}f_{3}\sigma_{2}^{11}}{(\sigma_{1}f-\sigma_{3})\sigma_{2}^{22}}-\frac{4\lambda_{2}u_{223}f_{3}\sigma_{2}^{22}}{(\sigma_{1}f-\sigma_{3})\sigma_{2}^{11}}\\
 &  & +\frac{2\sigma_{2}^{22}\sigma_{2}^{33}u_{221}u_{331}}{\sigma_{1}f-\sigma_{3}}+\frac{2\sigma_{2}^{11}\sigma_{2}^{33}u_{112}u_{332}}{\sigma_{1}f-\sigma_{3}}\\
 &  & +\frac{2\sigma_{2}^{11}\sigma_{2}^{22}u_{113}u_{223}}{\sigma_{1}f-\sigma_{3}}+\frac{4fu_{123}^{2}}{\sigma_{1}f-\sigma_{3}}\\
 &  & +\frac{\sigma_{2}^{33}[4f+(\sigma_{2}^{22})^{2}]u_{331}^{2}}{(\sigma_{1}f-\sigma_{3})\sigma_{2}^{22}}+\frac{\sigma_{2}^{22}[4f+(\sigma_{2}^{33})^{2}]u_{221}^{2}}{(\sigma_{1}f-\sigma_{3})\sigma_{2}^{33}}\\
 &  & +\frac{\sigma_{2}^{33}[4f+(\sigma_{2}^{11})^{2}]u_{332}^{2}}{(\sigma_{1}f-\sigma_{3})\sigma_{2}^{11}}+\frac{\sigma_{2}^{11}[4f+(\sigma_{2}^{33})^{2}]u_{112}^{2}}{(\sigma_{1}f-\sigma_{3})\sigma_{2}^{33}}\\
 &  & +\frac{\sigma_{2}^{11}[4f+(\sigma_{2}^{22})^{2}]u_{113}^{2}}{(\sigma_{1}f-\sigma_{3})\sigma_{2}^{22}}+\frac{\sigma_{2}^{22}[4f+(\sigma_{2}^{11})^{2}]u_{223}^{2}}{(\sigma_{1}f-\sigma_{3})\sigma_{2}^{11}}\\
 &  & -C\sigma_{1}+f_{u_{i}}V_{i}.
\end{eqnarray}
Square these terms,
\begin{eqnarray}
\sigma_{2}^{ij}V{}_{ij} & \geq & \frac{\sigma_{2}^{22}\sigma_{2}^{33}(u_{331}+u_{221})^{2}}{\sigma_{1}f-\sigma_{3}}+\frac{\sigma_{2}^{33}\sigma_{2}^{11}(u_{332}+u_{112})^{2}}{\sigma_{1}f-\sigma_{3}}\\
 &  & +\frac{\sigma_{2}^{11}\sigma_{2}^{22}(u_{113}+u_{223})^{2}}{\sigma_{1}f-\sigma_{3}}+\frac{4fu_{123}^{2}}{\sigma_{1}f-\sigma_{3}}\\
 &  & +\frac{4f\sigma_{2}^{33}(u_{331}-\frac{f_{1}}{2f}\lambda_{3})^{2}}{(\sigma_{1}f-\sigma_{3})\sigma_{2}^{22}}+\frac{4f\sigma_{2}^{22}(u_{221}-\frac{f_{1}}{2f}\lambda_{2})^{2}}{(\sigma_{1}f-\sigma_{3})\sigma_{2}^{33}}\\
 &  & +\frac{4f\sigma_{2}^{33}(u_{332}-\frac{f_{2}}{2f}\lambda_{3})^{2}}{(\sigma_{1}f-\sigma_{3})\sigma_{2}^{11}}+\frac{4f\sigma_{2}^{11}(u_{112}-\frac{f_{2}}{2f}\lambda_{1})^{2}}{(\sigma_{1}f-\sigma_{3})\sigma_{2}^{33}}\\
 &  & +\frac{4f\sigma_{2}^{11}(u_{113}-\frac{f_{3}}{2f}\lambda_{1})^{2}}{(\sigma_{1}f-\sigma_{3})\sigma_{2}^{22}}+\frac{4f\sigma_{2}^{22}(u_{223}-\frac{f_{3}}{2f}\lambda_{2})^{2}}{(\sigma_{1}f-\sigma_{3})\sigma_{2}^{11}}\\
 &  & -C\sigma_{1}+f_{u_{i}}V{}_{i}.
\end{eqnarray}
In above inequality we have used 
\begin{eqnarray}
\frac{\sigma_{2}^{22}f_{3}^{2}\lambda_{2}^{2}}{(\sigma_{1}f-\sigma_{3})\sigma_{2}^{11}} & \leq & \frac{C\sigma_{2}^{22}\lambda_{3}^{2}\lambda_{2}^{2}}{(\sigma_{1}f-\sigma_{3})\sigma_{2}^{11}}\\
 & = & \frac{C\sigma_{2}^{22}(f-\lambda_{1}\sigma_{2}^{11})^{2}}{(\sigma_{1}f-\sigma_{3})\sigma_{2}^{11}}\\
 & \leq & C\sigma_{1}+C\frac{\sigma_{2}^{22}\sigma_{2}^{11}\lambda_{1}^{2}}{\sigma_{1}f-\sigma_{3}}\\
 & \leq & C\sigma_{1}+C\frac{\sigma_{2}^{22}(f\lambda_{1}-\sigma_{3})}{\sigma_{1}f-\sigma_{3}}\\
 & \leq & C\sigma_{1}.
\end{eqnarray}

So we have proved this lemma.
\end{proof}

\subsection{Proof of (\ref{eq:V-1})}

Denote $\rho(x)=5^{2}-|x|^{2}$, and $M_{r}:=\sup_{B_{1}}\sigma_{1}$.
We consider test function 
\begin{equation}
\varphi(x)=4\log\rho(x)+h(\frac{|Du|^{2}}{2})+\log\log\{\frac{(\sigma_{1}f-\sigma_{3})\log^{\frac{1}{2}}M_{5}}{M_{5}\log(\sigma_{1}f-\sigma_{3})},20\}
\end{equation}
in $B_{5}$, where $h(t)=-\frac{1}{2}\log(1-\frac{t}{2\max|Du|^{2}+1})$. 

Assume $\varphi$ attains its maximum point $z_{0}\in\Omega$ , and
$\frac{(\sigma_{1}f-\sigma_{3})\log^{\frac{1}{2}}M_{5}}{M_{5}\log(\sigma_{1}f-\sigma_{3})}>20$.
Then $\varphi$ must attained its maximum point in the ring $B_{5}$.
Moreover, we may always assume $\frac{(\sigma_{1}f-\sigma_{3})\log^{\frac{1}{2}}M_{5}}{M_{5}\log(\sigma_{1}f-\sigma_{3})}$
sufficiant large. We choose coordinate frame $\{e_{1},e_{2},e_{3}\}$,
such that $D^{2}u$ is diagonalized at this point. We denote $M=\log\frac{M_{5}}{\log^{\frac{1}{2}}M_{5}}$.

By maximum principle, at $z_{0}$, we have

\begin{equation}
0=\varphi_{i}=\frac{4\rho_{i}}{\rho}+h^{\prime}u_{k}u_{ki}+\frac{V_{i}-\frac{V_{i}}{V}}{V-M-\log V}.\label{eq:fp-1}
\end{equation}
and

\begin{eqnarray}
\varphi_{ij} & = & \frac{4\rho_{ij}}{\rho}-\frac{4\rho_{i}\rho_{j}}{\rho^{2}}+h^{\prime\prime}u_{k}u_{ki}u_{l}u_{lj}+h^{\prime}(u_{kj}u_{ki}+u_{k}u_{kij})\\
 &  & +\frac{V_{ij}-\frac{V_{ij}}{V}+\frac{V_{i}V_{j}}{V^{2}}}{V-M-\log V}-\frac{V_{i}V_{j}}{(V-M-\log V)^{2}}(1-\frac{1}{V})^{2}.
\end{eqnarray}

Contracting with $\sigma_{2}^{ij}:=\frac{\partial\sigma_{2}(D^{2}u)}{\partial u_{ij}}$,
we get

\begin{eqnarray*}
\sigma_{2}^{ij}\varphi_{ij} & \geq & -8\frac{\sum\sigma_{2}^{ii}}{\rho}-16\frac{\sigma_{2}^{ii}x_{i}^{2}}{\rho^{2}}+h^{\prime\prime}\sigma_{2}^{ii}u_{i}^{2}u_{ii}^{2}+h^{\prime}(\sigma_{1}f-3\sigma_{3})\\
 &  & +h^{\prime}u_{i}f_{i}+\frac{\sigma_{2}^{ii}V_{ii}(1-\frac{1}{V})}{V-M-\log V}-\frac{\sigma_{2}^{ii}V_{i}^{2}}{(V-M-\log V)^{2}}(1-\frac{1}{V})^{2}.
\end{eqnarray*}

By (\ref{eq:subV}), 
\begin{equation}
\sigma_{2}^{ii}V_{ii}\geq-C\sigma_{1}+f_{u_{i}}V_{i}.
\end{equation}

And by (\ref{eq:fp-1}), 
\begin{equation}
f_{u_{i}}\frac{V_{i}-\frac{V_{i}}{V}}{V-M-\log V}+h^{\prime}u_{i}f_{i}\geq-C(\frac{1}{\rho}+h^{\prime})
\end{equation}

Then combine this inequality and (\ref{eq:fp-1}), we have  
\begin{eqnarray}
\sigma_{2}^{ij}\varphi_{ij} & \geq & -8\frac{\sum\sigma_{2}^{ii}}{\rho}-48\frac{\sigma_{2}^{ii}x_{i}^{2}}{\rho^{2}}+(h^{\prime\prime}-2(h^{\prime})^{2})\sigma_{2}^{ii}u_{i}^{2}u_{ii}^{2}\label{eq:var}\\
 &  & +\frac{h^{\prime}}{2}(\sigma_{1}f-3\sigma_{3}).
\end{eqnarray}

If we choose $h(t)=-\frac{1}{2}\log(1-\frac{t}{2\max|Du|^{2}})$ such
that $h^{\prime\prime}-2(h^{\prime})^{2}\geq0$, we have 
\begin{eqnarray}
\sigma_{2}^{ij}\varphi_{ij} & \geq & -4\frac{\sum\sigma_{2}^{ii}}{\rho}-1200\frac{\sum\sigma_{2}^{ii}}{\rho^{2}}+h^{\prime}\frac{M_{5}\log(\sigma_{1}f-\sigma_{3})}{\log^{\frac{1}{2}}M_{5}}\\
 & \geq & -1300\frac{\sum\sigma_{2}^{ii}}{\rho^{2}}+h^{\prime}\frac{M_{5}\log^{\frac{1}{2}}M_{5}}{2}.
\end{eqnarray}
So we get
\begin{equation}
\rho^{4}(z_{0})\log\frac{(\sigma_{1}f-\sigma_{3})(z_{0})\log^{\frac{1}{2}}M_{5}}{M_{5}\log(\sigma_{1}f-\sigma_{3})(z_{0})}\le3\rho^{4}(z_{0})\log M_{5}\leq C.
\end{equation}

Finally, we obtain in $B_{4}$ 
\begin{equation}
(\sigma_{1}f-\sigma_{3})(x)\leq\frac{M_{5}\log(\sigma_{1}f-\sigma_{3})}{\log^{\frac{1}{2}}M_{5}}.
\end{equation}

\end{document}